\newcommand{\footremember}[2]{%
    \footnote{#2}
    \newcounter{#1}
    \setcounter{#1}{\value{footnote}}%
}
\newcommand{\Fix}{\mathrm{Fix}\,}
\newcommand{\Ker}{\mathrm{Ker}\,}
\newcommand{\conv}{\mathrm{conv}\,}
\newcommand{\aff}{\mathrm{aff}\,}
\newcommand{\bdry}{\mathrm{bdry}\,}
\newcommand{\cone}{\mathrm{cone}\,}
\newcommand{\geo}{\mathrm{geo}\,}
\newcommand{\N}{\mathbb{N}}
\newcommand{\R}{\mathbb{R}}
\newcommand{\Id}{\mathrm{Id}}
\newcommand{\Hi}{\mathcal{H}}
\newtheorem{theorem}{Theorem}
\newtheorem{proposition}[theorem]{Proposition}
\newtheorem{lemma}[theorem]{Lemma}
\newtheorem{corollary}[theorem]{Corollary}
\theoremstyle{definition}
\newtheorem{example}[theorem]{Example}
\newtheorem{definition}[theorem]{Definition}
\title{Finite Convergence of Circumcentered-Reflection Method on Closed Polyhedral Cones in Euclidean Spaces}
\author{Hongzhi Liao\footremember{alley}{School of Mathematics and Statistics, UNSW Sydney, Australia, e-mail: hongzhi.liao@unsw.edu.au}}
\begin{document}

\maketitle

\begin{abstract}
    The Circumcentered Reflection Method (CRM) is a recently developed projection method for solving convex feasibility problems. It offers preferable convergence properties compared to classic methods such as the Douglas–Rachford and the alternating projections method. In this study, our first main theorem establishes that CRM can identify a feasible point in the intersection of two closed convex cones in $\R^2$ from any starting point in the Euclidean plane. We then apply this theorem to intersections of two polyhedral sets in $\R^2$ and two wedge-like sets in $\R^n$, proving that CRM converges to a point in the intersection from any initial position finitely. Additionally, we introduce a modified technique based on CRM, called the Sphere-Centered Reflection Method. With the help of this technique, we demonstrate that CRM can locate a feasible point in finitely many iterations in the intersection of two proper polyhedral cones in $\R^3$ when the initial point lies in a subset of the complement of the intersection's polar cone. Lastly, we provide an example illustrating that finite convergence may fail for the intersection of two proper polyhedral cones in $\R^3$ if the initial guess is outside the designated set.
\end{abstract}

\noindent \textbf{Keywords: }Circumcentered-reflection method, Feasibility problem, Closed convex cones, Finite convergence, Polyhedral sets

\section{Introduction}

Projection methods, such as the Douglas–Rachford and alternating projections methods, are widely applied algorithms for solving (convex) feasibility problems, that is, finding a common point in the intersection of finitely many (convex) sets. Feasibility problems are fundamental in science and engineering; see \cite{bauschke1996projection} and \cite{combettes1993foundations} for examples. The Circumcentered Reflection Method (CRM), a modification of the Douglas–Rachford method, was first proposed by Behling, Cruz, and Santos in \cite{BCS2018-1} to address feasibility problems, initially applied to two subspaces in $\R^n$. Shortly thereafter, they extended CRM to finitely many affine subspaces in $\R^n$ and proved its linear convergence rate \cite{BCS2018-2}, showing that it outperforms the alternating projections method in terms of convergence rate \cite{arefidamghani2021circumcentered}. Bauschke, Ouyang, and Wang further provided sufficient conditions for the linear convergence of CRM \cite{bauschke2020circumcentered}. Additionally, Behling, Cruz, and Santos demonstrated that this approach could be generalized to convex sets in $\R^n$ with the same convergence rate, see \cite{iusem2022circumcentered} and \cite{BCS2021}.

\par However, fewer results address the finite convergence performance of CRM. Behling, Cruz, and Santos demonstrated that CRM finds the projection of any given point onto the intersection of finitely many hyperplanes in 
$\R^n$ in a single step \cite{BCS2020}. In Hilbert spaces, Ouyang proved that for cases involving two hyperplanes, two half-spaces, or a combination of one hyperplane and one half-space, CRM reaches either the best approximation or a feasible point of the intersection within at most three steps, depending on the linear relation of the normal vectors of the half-space or hyperplane \cite{ouyang2020finite}.

\par In this paper, we build on ideas from \cite{dao2023douglas} to prove that CRM finds a feasible point in the intersection of two closed convex cones in $\R^2$ in three steps (see Theorem \ref{main}). By applying this theorem along with \cite[Theorem 3]{BCS2021}, we demonstrate finite convergence for the intersection of two polyhedral sets in $\R^2$ and two wedge-like sets in $\R^n$, as shown in Theorem \ref{poly_finite} and Corollary \ref{cor:wedge2}. In $\R^3$, we prove that CRM can locate a feasible point in the intersection of two proper polyhedral cones in finitely many steps if the initial guess lies in the ``finite convergence zone" (see Theorem \ref{thm:R3poly}). Finally, we provide an example illustrating that finite convergence does not hold in $\R^3$ for two proper polyhedral cones if the initial guess is outside this zone (see Example \ref{counterexample}).

\par The structure of this paper is as follows. In Section \ref{Pl}, we introduce the notation, auxiliary results, and the definition of the Circumcentered Reflection Method. The main theorem for $\R^2$ is presented in Section \ref{F}. In Section \ref{P}, we apply this theorem to polyhedral sets in $\R^2$ and to higher-dimensional cases where the reflections lie in the same plane. To conclude, in Section \ref{FR3} we introduce the Sphere-Centered Reflection Method (see Definition \ref{SRM}) and use it to analyze the finite convergence performance for two proper polyhedral cones in $\R^3$. This section also includes an example showing that finite convergence of CRM does not hold for two proper polyhedral cones in $\R^3$ (and higher dimensions) if the initial guess is outside the ``finite convergence zone."

\section{Preliminaries}\label{Pl}

\subsection{Notations and auxiliary results}

In this paper, we use \(\Hi\) to stand for a \textit{real Hilbert spaces}, \(\langle \cdot \,, \cdot \rangle\) for the \textit{inner product} and \(\|\cdot\|\) for the \textit{norm} associated with the inner product in $\R^n$. Given a point $x \in \Hi$ and $r>0$, \(B_r(x)\) is the \textit{closed ball} in \(\Hi\) centered at $x$ with radius $r$. Let $S$ be a nonempty subset of $\Hi$, then $\aff S$ is \textit{the smallest affine subspace} of $\Hi$ containing $S$, the \textit{interior} and \textit{boundary} of $S$ is \(\mbox{int}\, S\) and $\bdry S$ respectively. 

\par For two nonempty subsets $A,B \subseteq \Hi$, we define $A+B \coloneqq\{x+y \,|\, x\in A,y\in B\}$ and $A-B \coloneqq \{x-y \,|\, x\in A,y\in B\}$, known as the \textit{Minkowski sum and subtraction} respectively. We use $A\oplus B$ to denote the \textit{direct sum} of $A$ and $B$, $A \perp B$ to represent $A$ is \textit{perpendicular} to $B$, i.e., for all $a \in A$ and $b \in B$, we have $\langle a\,, b \rangle = 0$. 

\par Let $x\in \Hi$ and $C\subseteq \Hi$ be nonempty, the \textit{distance} of $x$ to $C$ is $d_C(x)\coloneqq{\rm inf}_{y \in C} \|x-y\|$. 
If there exists a point $p \in C$ such that \(\|x-p\|=d_C(x)\), we call $p$ is a \textit{best approximation} to $x$ from $C$, or a \textit{projection} of $x$ onto $C$. If every point in \(\Hi\) has a unique projection onto \(C\), we say $C$ is a \textit{Chebyshev set}. In this case, we let \(P_C(x)\coloneqq p\) represent the projection. Recall there is a famous theorem about the Chebyshev set and the property of projections, see \cite[Theorem 3.16]{BauschkeHeinz2017}: 

\begin{theorem}[Projection theorem]\label{ProjectionTheorem}
If $C$ is a nonempty closed convex subset of $\Hi$, then C is a Chebyshev set. And for every $x,p \in \Hi$, the following statements are equivalent:
\begin{enumerate}[(i)]
    \item \(p=P_C(x)\);
    \item \(p \in C\), for all \(y \in C\), \(\langle y-p \,, x-p \rangle \leq 0.\)
\end{enumerate}
\end{theorem}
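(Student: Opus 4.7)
The plan is to prove the two assertions separately: first establish that every $x \in \Hi$ has a unique nearest point in $C$ (the Chebyshev property), then prove the equivalence (i)~$\Leftrightarrow$~(ii) via a variational argument. Both steps hinge on the parallelogram identity, which is the key Hilbert-space ingredient that would be unavailable in a general Banach space.

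For the Chebyshev claim, my strategy is classical. Fix $x \in \Hi$, set $d := d_C(x)$, and pick a minimizing sequence $(y_n) \subseteq C$ with $\|x - y_n\| \to d$. Applying the parallelogram identity to $x - y_m$ and $x - y_n$ yields
\[
\|y_m - y_n\|^2 = 2\|x - y_m\|^2 + 2\|x - y_n\|^2 - 4\bigl\|x - \tfrac{y_m + y_n}{2}\bigr\|^2.
\]
Convexity of $C$ places $\tfrac{y_m + y_n}{2}$ in $C$, hence $\|x - \tfrac{y_m + y_n}{2}\| \ge d$, and the right-hand side is bounded above by $2\|x - y_m\|^2 + 2\|x - y_n\|^2 - 4d^2$, which tends to $0$. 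Thus $(y_n)$ is Cauchy; completeness of $\Hi$ and closedness of $C$ deliver a limit $p \in C$ realizing the distance. Uniqueness follows from the same identity applied to two supposed minimizers $p_1, p_2$, since it then forces $\|p_1 - p_2\|^2 \le 4d^2 - 4d^2 = 0$.

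For the characterization, the implication (i)~$\Rightarrow$~(ii) is a short convex variation: for $y \in C$ and $t \in (0, 1]$, convexity gives $p + t(y - p) \in C$, so $\|x - p\|^2 \le \|x - p - t(y - p)\|^2$. Expanding, cancelling $\|x - p\|^2$, dividing by $t$, and sending $t \to 0^+$ produces $\langle y - p, x - p \rangle \le 0$. Conversely, for (ii)~$\Rightarrow$~(i), a direct expansion gives
\[
\|x - y\|^2 = \|x - p\|^2 - 2\langle y - p, x - p \rangle + \|y - p\|^2 \ge \|x - p\|^2
\]
for every $y \in C$, identifying $p$ as the unique projection.

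The main technical moment is the Cauchy-ness step: this is exactly where convexity of $C$ (via the midpoint $\tfrac{y_m + y_n}{2} \in C$) and the Hilbert structure of $\Hi$ (via the parallelogram law) are both indispensable, and it is what can fail outside this setting. Once existence and uniqueness are in hand, the characterization is a routine manipulation of the inner product.
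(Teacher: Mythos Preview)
Your argument is correct and is precisely the standard Hilbert-space proof: existence and uniqueness via the parallelogram law applied to a minimizing sequence (using convexity to place midpoints in $C$), and the variational characterization via the convex perturbation $p + t(y-p)$ together with a direct expansion for the converse. Every step is sound.

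The paper, however, does not supply its own proof of this theorem at all: it is quoted as a background fact with a reference to \cite[Theorem~3.16]{BauschkeHeinz2017}. So there is no substantive comparison to make beyond noting that you have written out in full the classical argument that the paper merely cites. Your write-up is self-contained and would serve as a perfectly adequate replacement for the bare citation.
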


\par Suppose \(C \subseteq \Hi\) is a nonempty closed convex set. The \textit{reflection operator} associated with \(C\) is defined as \(R_C \coloneqq 2P_C - \Id\), where \(\Id\) is the \textit{identity mapping}.

Let \(S\) be a subset of \(\Hi\), \(S\) is a \textit{closed cone} if it is closed and satisfies 
\[S=\bigcup_{\lambda \geq 0}\lambda S, \;{\rm where}\; \lambda S = \{\lambda x \,|\, x \in S\} \;\mbox{for a given}\; \lambda \in \R.\]
The \textit{conic hull} of $S$ contains all conical combinations of elements in $S$, that is 
    \[\cone (S)\coloneqq \left\{\sum_{i=1}^k \alpha_i x_i \,\bigg|\, x_i \in S, \alpha_i \geq 0, k\in \N \right\}.\]
    
Assume \(S\) is a nonempty subset of \(\Hi\), the \textit{polar cone} of \(S\) is given by
    \[S^{\ominus}\coloneqq\{x \in \Hi \,|\,  \langle x ,y \rangle \leq 0 \;\,\mbox{for any}\;\, y \in S\}.\]
    
For an operator $T:\Hi \to \Hi$, the \textit{kernel} of $T$ is
\[\Ker T\coloneqq \{x \in \Hi \,|\, T(x)=0\},\]
and its \textit{fixed-point set} is 
\[\Fix T\coloneqq\{x \in \Hi \,|\, T(x)=x\}.\]

Given point $x \in \Hi$, if there exists $n \in \N$ such that $T^n(x) \in \Fix T$, we say $T$ \textit{converges finitely} from $x$.
\par Let $x,y \in \Hi$ and $x \neq y$, the \textit{closed line segment} between points $x$ and $y$ is
    \[[x,y] \coloneqq \{(1-\alpha)x+\alpha y \,|\, 0 \leq \alpha \leq 1\}.\]
Assume $x$ is nonzero, the \textit{ray} generated by $x$ is
    $$[x] \coloneqq \{\lambda x \,|\, \lambda \geq 0\}.$$

\subsection{Circumcentered-reflection method (CRM)}

In this subsection, we introduce the circumcentered-reflection method.
The \textit{circumcenter} of three given non-collinear points $x,y,z \in \Hi$, which is denoted as $C (x,y,z)$, means $C(x,y,z)$ is equidistant to $x,y,z$ and \(C(x,y,z) \in \aff \{x,y,z\}\).


\begin{definition}
Let $A,B \subseteq \Hi$ be closed convex sets, $x\in \Hi$ and \(R_A, R_B\) be reflection operators associated with \(A, B\) respectively. The \textit{circumcentered-reflection operator} $C_T$ is defined as
\begin{equation*}
    C_T(x)\coloneqq C(x, R_A(x),R_B(R_A(x))).
\end{equation*}
\end{definition}

Figure \ref{fig:CRM} gives a geometrical intuition when \(A,B\) are closed convex cones in \(\R^2\), we can see $C_T(x) = 0$.

\begin{figure}[!ht]
    \centering
    \includegraphics[width = 13cm]{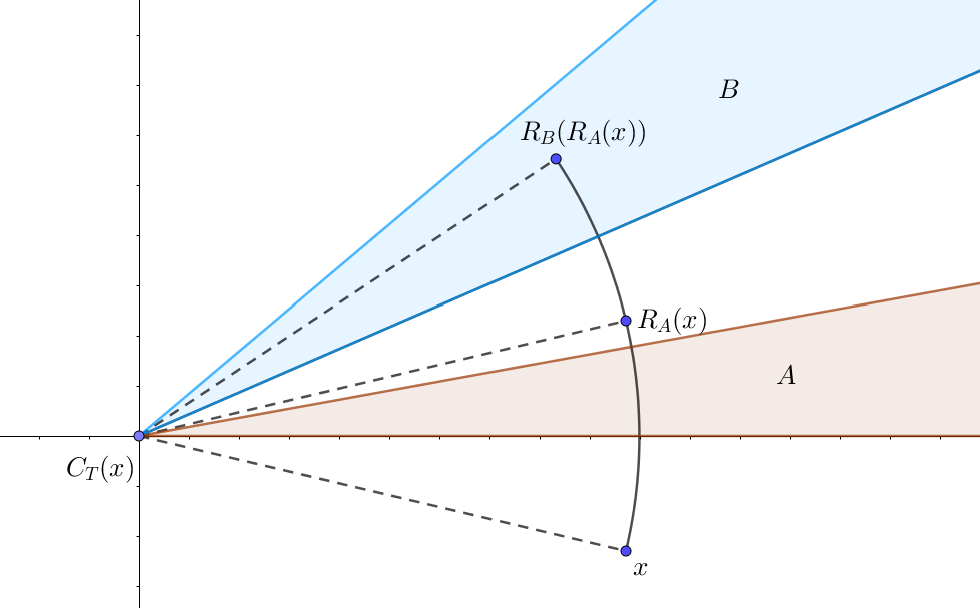}
    \caption{Geometrical interpretation of CRM when \(A,B\) are closed convex cones in \(\R^2\).}
    \label{fig:CRM}
\end{figure}

It is necessary to check whether $C_T$ is well-defined for given $A$ and $B$. Here we briefly specify two cases:
\begin{itemize}
    \item if $A,B$ are subspaces of $\R^n$, Behling, Cruz, and Santos has stated $C_T$ is well-defined in \cite[Introduction]{BCS2018-1}.
    
    \item if $A,B$ are closed convex sets of $\R^n$, $C_T(x)$ may not exist for some $x \in \R^n$. To fix this issue, according to \cite{BCS2021}, if we define \(X\coloneqq\{(x,x)\in \R^{2n} \,|\, x \in \R^n\}\) and \(Y\coloneqq A\times B\), then
    \begin{equation}\label{XY}
        x^* \in A \cap B \Leftrightarrow (x^*,x^*) \in X \cap Y.
    \end{equation}
     From \cite[Theorem 3]{BCS2021}, we have $C_T$ is well-defined for solving (\ref{XY}) and finds a point \((x^*,x^*) \in X \cap Y\), i.e., $C_T$ reaches a point \(x^* \in A\cap B\) with this transformation.
\end{itemize}

In contrast to other projection methods, for instance, the Douglas-Rachford method, the fixed-point set of $C_T$ coincides with $A \cap B$.

\begin{proposition}
Let $A,B \subseteq \Hi$ be closed convex sets and \(A \cap B \neq \varnothing\), then \(\Fix C_T = A \cap B\). 
\end{proposition}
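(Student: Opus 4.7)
The proof should be almost immediate once we unpack the definitions of the reflection and circumcenter operators, so I would present it as a short two-inclusion argument with a brief comment on how degenerate circumcenters are to be interpreted.

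For the inclusion $A\cap B \subseteq \Fix C_T$, I would take $x\in A\cap B$ and use $P_A(x)=x$ together with $P_B(x)=x$ to conclude $R_A(x) = 2P_A(x)-x = x$ and then $R_B(R_A(x)) = R_B(x) = x$. So the three arguments of $C(\cdot,\cdot,\cdot)$ coincide with $x$. Strictly speaking the definition of circumcenter given in the paper requires three non-collinear points, so I would either invoke (or briefly note) the standard extension in which the circumcenter of three points that are collinear or coincident is taken to be the unique point in their affine hull that is equidistant to all three; when the three points all equal $x$, that point is $x$ itself. Hence $C_T(x)=x$.

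For the reverse inclusion $\Fix C_T \subseteq A\cap B$, I would take $x$ with $C_T(x)=x$. By the defining property of the circumcenter, the value $C_T(x)$ is equidistant from the three input points $x$, $R_A(x)$, and $R_B(R_A(x))$. Substituting $C_T(x)=x$ yields
\begin{equation*}
\|x-x\| \;=\; \|x-R_A(x)\| \;=\; \|x-R_B(R_A(x))\|,
\end{equation*}
so both of the latter norms vanish. From $R_A(x)=x$ I get $2P_A(x)-x=x$, i.e.\ $P_A(x)=x$, and hence $x\in A$. Then $R_B(R_A(x))=x$ simplifies to $R_B(x)=x$, which by the same reasoning gives $x\in B$. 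Therefore $x\in A\cap B$.

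The only conceptual obstacle is the degenerate case of the circumcenter, and that is purely a bookkeeping issue: one checks that the natural convention (the unique equidistant point in the affine hull, which reduces to the common point when the inputs coincide) is consistent with how $C_T$ is used throughout the paper and with the figure showing $C_T(x)=0$ when all three reflection-iterates collapse to the origin. With that convention in place, both inclusions above are essentially one-line computations, and the proposition follows.
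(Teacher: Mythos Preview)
Your two-inclusion argument is correct. The equidistance property of the circumcenter immediately forces $R_A(x)=x$ and $R_B(R_A(x))=x$ whenever $C_T(x)=x$, and the converse direction is handled by the (tacit) convention that $C(x,x,x)=x$; this convention is indeed the one the paper relies on elsewhere, e.g.\ in the case analysis of Theorem~\ref{main}, where $C_T$ is evaluated as a midpoint when two of the three inputs coincide.

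The paper itself does not give an argument here at all: its proof consists solely of the citation ``See \cite[Introduction]{BCS2018-1}.'' So your proposal is not a different route so much as an explicit, self-contained replacement for a deferred reference. What you gain is that the reader does not have to consult an external source for a fact whose verification is, as you note, essentially a one-line computation in each direction; what the paper gains by citing is brevity and an acknowledgement that the result is already known.
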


\begin{proof}
See \cite[Introduction]{BCS2018-1}.
\end{proof}

\section{Finite convergence of CRM on closed convex cones in $\R^2$}\label{F}
We present the two technical results first, followed by the main theorem.


\begin{lemma}\label{orthogonality}
Suppose $C$ is a nonempty closed convex cone in $\Hi$. Let \(x \in \Hi\) and \(p=P_C(x)\), then 
\[\langle p \,, x-p \rangle = 0.\]
\end{lemma}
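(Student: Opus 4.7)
The plan is to apply the characterization of projections from Theorem \ref{ProjectionTheorem} with carefully chosen test points, exploiting the fact that a cone is closed under non-negative scalar multiplication.

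First I would note that since $C$ is a nonempty closed convex cone, $0 \in C$ and $\lambda p \in C$ for every $\lambda \geq 0$. Combining this with the variational inequality characterization of the projection (Theorem \ref{ProjectionTheorem}(ii)), I have $\langle y - p, x - p \rangle \leq 0$ for all $y \in C$. The idea is to feed in two particular choices of $y$ from the ray $[p] \subseteq C$ to pin down the inner product from both sides.

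Concretely, taking $y = 2p \in C$ yields
\[
\langle 2p - p, x - p \rangle = \langle p, x - p \rangle \leq 0,
\]
while taking $y = 0 \in C$ yields
\[
\langle 0 - p, x - p \rangle = -\langle p, x - p \rangle \leq 0,
\]
i.e., $\langle p, x - p \rangle \geq 0$. Combining the two inequalities gives the desired equality $\langle p, x - p \rangle = 0$.

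There is no real obstacle here; the only thing to be mindful of is to use the conic structure (rather than just convexity) to ensure both $0$ and $2p$ lie in $C$, since arbitrary convex sets would not permit these test points. The entire argument is a one-line consequence of the projection theorem, so I expect the write-up to be very short.
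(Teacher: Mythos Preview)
Your proposal is correct and matches the paper's own proof almost verbatim: both choose the test points $y=0$ and $y=2p$ in the variational inequality of Theorem~\ref{ProjectionTheorem} to squeeze $\langle p, x-p\rangle$ between $0$ and $0$. There is nothing to add.
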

\begin{proof}
Since $C$ is a nonempty closed convex cone in $\Hi$ and \(p \in C\), then \(0 \in C\) and \(2p \in C\). By Theorem \ref{ProjectionTheorem} we have
\[\langle 0-p \,, x-p \rangle \leq 0 \quad \mbox{and} \quad \langle 2p-p \,, x-p \rangle \leq 0,\]
i.e., \(\langle p \,, x-p \rangle = 0\).
\end{proof}

\begin{corollary}\label{PreserveNorm}
Suppose $C$ is a nonempty closed convex cone in $\Hi$ and $R_C$ is the reflection operator associated with $C$. Then for every \(x \in \Hi\), \(\|x\|=\|R_C(x)\|\).
\end{corollary}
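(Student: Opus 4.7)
The plan is to expand $\|R_C(x)\|^2$ directly using the definition $R_C(x) = 2P_C(x) - x$ and then invoke Lemma \ref{orthogonality} to collapse the cross term. Writing $p = P_C(x)$, I compute
\begin{equation*}
    \|R_C(x)\|^2 = \|2p - x\|^2 = 4\|p\|^2 - 4\langle p, x\rangle + \|x\|^2.
\end{equation*}

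The key step is to rewrite $\langle p, x\rangle$. By Lemma \ref{orthogonality}, $\langle p, x - p\rangle = 0$, which gives $\langle p, x\rangle = \|p\|^2$. Substituting back,
\begin{equation*}
    \|R_C(x)\|^2 = 4\|p\|^2 - 4\|p\|^2 + \|x\|^2 = \|x\|^2,
\end{equation*}
and taking square roots yields $\|R_C(x)\| = \|x\|$, as desired.

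There is no real obstacle here; the entire content is the orthogonality identity supplied by the previous lemma, and the corollary is essentially a one-line algebraic consequence. The only thing to be slightly careful about is that Lemma \ref{orthogonality} requires $C$ to be a nonempty closed convex cone, which is exactly the hypothesis we are given, so the application is immediate.
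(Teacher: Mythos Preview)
Your proof is correct and essentially identical to the paper's own argument: both expand $\|2p-x\|^2$ and use Lemma~\ref{orthogonality} to kill the cross term. The only cosmetic difference is that the paper groups the terms as $4\langle p, p-x\rangle + \|x\|^2$ before invoking the lemma, whereas you substitute $\langle p, x\rangle = \|p\|^2$ directly; the content is the same.
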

\begin{proof}
Suppose \(p=P_C(x)\), then by Lemma \ref{orthogonality},
\begin{equation*}
    \|R_C(x)\|^2 =\|2p-x\|^2=4\|p\|^2 - 4\langle p \,, x \rangle + \|x\|^2=4\langle p \,, p-x \rangle+\|x\|^2=\|x\|^2,
\end{equation*}
i.e., \(\|x\|=\|R_C(x)\|\).
\end{proof}

Now we state the first main theorem of this paper.

\begin{theorem}\label{main}
Let \(A,B \subseteq \R^2\) be nonempty closed convex cones. Then for any initial guess \(x \in \R^2\), CRM finds a feasible point in $A \cap B$ in at most three steps.
\end{theorem}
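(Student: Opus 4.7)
The plan is to exploit the fact, established in Corollary~\ref{PreserveNorm}, that reflections through closed convex cones preserve norms; consequently the three CRM samples $x$, $R_A(x)$, and $R_B(R_A(x))$ all have norm $\|x\|$ and therefore lie on the common circle $S \subseteq \R^2$ of radius $\|x\|$ centered at the origin. Since $A$ and $B$ are cones, $0 \in A \cap B$, so the intersection is automatically nonempty. After disposing of the trivial case $x = 0$, the central observation is that whenever the three samples are non-collinear they uniquely determine the circle through them, which must be $S$; hence the circumcenter is $0$ and a single CRM step already delivers $C_T(x) = 0 \in A \cap B$.

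All remaining work concerns collinear configurations. Since a chord meets a circle in at most two points, collinearity forces at least two of the samples to coincide. I would enumerate the three possible coincidence patterns and identify $x_1 \coloneqq C_T(x)$, using the convention that when only two distinct points remain the circumcenter reduces to their midpoint. Concretely, $x = R_A(x)$ means $x \in A$ and gives $x_1 = P_B(x) \in B$; the relation $R_A(x) = R_B(R_A(x))$ means $R_A(x) \in B$ and gives $x_1 = P_A(x) \in A$; while $x = R_B(R_A(x))$ yields the identity $P_B(R_A(x)) = P_A(x)$, placing $x_1$ directly in $A \cap B$. Thus after Step~1, either we are done or $x_1$ lies in exactly one of $A$, $B$.

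The proof is completed by running the same dichotomy on Steps~2 and~3, aided by two further facts. First, if the current iterate lies in $A$ then $R_A$ fixes it, so the step is automatically degenerate and produces the projection onto $B$. Second, if the current iterate already lies in $B$ and the step is collinear but not terminating, then in each sub-case either $R_A$ sends it to another point of $B$ (so convexity places their midpoint $P_A(\cdot)$ inside $B$), or the identity $P_B(R_A(\cdot)) = P_A(\cdot)$ applies; in both situations the next iterate ends up in $A \cap B$. Combining these, the branch where $x_1 \in B$ terminates at Step~2, while the branch where $x_1 \in A$ first needs to be translated by one step to $x_2 = P_B(x_1) \in B$ before finishing at Step~3. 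I expect the main obstacle to be the bookkeeping of these collinear sub-cases and the verification that the degenerate ``circumcenter'' convention makes each degenerate iterate coincide with the relevant projection, so that convexity can be applied to force the final iterate into $A \cap B$.
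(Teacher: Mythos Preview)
Your proposal is correct and follows essentially the same route as the paper: both arguments use Corollary~\ref{PreserveNorm} to place $x,\,R_A(x),\,R_B(R_A(x))$ on the circle of radius $\|x\|$, dispatch the non-degenerate case to $C_T(x)=0$, and then run a case split on which pair coincides, using that the midpoint is the relevant projection and that convexity of $B$ traps $P_A(x_1)$ once both $x_1$ and $R_A(x_1)$ lie in $B$. Your handling of the coincidence $x=R_B(R_A(x))$ is in fact slightly sharper than the paper's---you note directly that $P_A(x)=P_B(R_A(x))\in A\cap B$, whereas the paper only records $x_1\in B$ there and appeals to the earlier sub-analysis---but the overall structure and step count are the same.
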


\begin{proof}
First, we have \(0 \in A \cap B\) thus \(A \cap B\) is always nonempty. For simplicity, in this proof we let \(y\coloneqq R_A(x)\) and \(z\coloneqq R_B(y)=R_B(R_A(x))\). Let \(x_1\coloneqq C_T(x)\), and similarly \(y_1\coloneqq R_A(x_1),z_1\coloneqq R_B(y_1)=R_B(R_A(x_1))\) as well. The proof can be split into three parts according to the cardinality of \(\{x,y,z\}\). If the cardinality of \(\{x,y,z\}\) is 1, then \(x=y=z\), and thus \(x \in A \cap B\). If the cardinality of \(\{x, y, z\}\) is 3, then $x, y$ and $z$ are all distinct. Since reflection
operators \(R_A\) and \(R_B \circ R_A\) are norm-preserving by Corollary \ref{PreserveNorm}, we have \(\|x\|=\|y\|=\|z\|\) and \(C_T(x)=0 \in A \cap B\). The remaining case is when the cardinality of \(\{x, y, z\}\) is 2.

\begin{enumerate}[(i)]
    \item If \(x=y \neq z\), equivalently \(x \in A\) and \(x \notin B\), then by definition of \(C_T\) we have \[C_T(x)=\frac{1}{2}(x+z)=\frac{1}{2}(x+R_B(R_A(x)))=\frac{1}{2}(x+2P_B(x)-x)=P_B(x),\]
    i.e., \(x_1=C_T(x) \in B\).
    \begin{enumerate}[(a)]
        \item If \(x_1 \in A\), then \(C_T(x) \in A \cap B\) trivially. 
        \item If \(x_1 \notin A\) and \(x_1, y_1,z_1\) are distinct, then \(C_T^2(x)=C_T(x_1)=0 \in A \cap B\).
        \item If \(x_1 \notin A\) and the cardinality of \(\{x_1,y_1,z_1\}\) is 2, then either \(y_1=z_1\) or \(x_1=z_1\). Note that since \(x_1 \notin A\) we always have \(x_1 \neq y_1\). Further, the cardinality of \(\{x_1,y_1,z_1\}\) cannot be 1 otherwise \(x_1\) must lie in $A$, contradicting $x_1 \notin A$. First assume \(y_1=z_1\). As both \(x_1, y_1 \in B\) and \(B\) is a closed convex cone, we must have \[C_T(x_1)=\frac{1}{2}(x_1+ y_1)=P_A(x_1) \in B,\] 
        thus \(C_T(x_1) \in A \cap B\). Secondly, if \(x_1=z_1\), \(R_A\) and \(R_B \circ R_A\) reflect \(x_1\) through same point \(q = P_A(x_1) = P_B(y_1)\in A\cap B\). Therefore \[C_T(x_1)=\frac{1}{2}(x_1+ y_1)=\frac{1}{2}(y_1+z_1)=q\in A \cap B.\]
    \end{enumerate}
    \item If \(x \neq y=z\), equivalently \(x \notin A\) and \(y \in B\), then by definition of \(C_T\) we have
    \[C_T(x)=\frac{1}{2}(x+y)=\frac{1}{2}(x+R_A(x))=\frac{1}{2}(x+2P_A(x)-x)=P_A(x),\]
    i.e., \(C_T(x) \in A\). Then \(y_1=x_1\) thus
    \[C_T(x_1)=\frac{1}{2}(x_1+R_B(R_A(x_1)))=\frac{1}{2}(x_1+R_B(x_1))=P_B(x_1) \in B,\]
    then refer to (i).
    \item If \(x = z \neq y\), equivalently \(x \notin A\) and \(x = R_B(R_A(x))\), then by definition of \(C_T\) we have
    \[C_T(x)=\frac{1}{2}(y+z)=\frac{1}{2}(R_A(x)+R_B(R_A(x)))=P_B(R_A(x)) \in B,\]
    then refer to (i).
\end{enumerate}
\end{proof}

\section{Finite convergence of CRM on polyhedral sets}\label{P}

In this section, we apply Theorem \ref{main} to polyhedral sets in $\R^2$ and wedge-like sets in $\R^n$. Following \cite{rockafellar1970convex}, a subset of $\R^n$ is a \textit{polyhedral set} if it is a finite intersection of closed half-spaces. According to \cite[Lemma 3]{soltan2022local}, a polyhedral set is conic at every point within itself. For the convenience of the readers, we provide a definition of local conicity and a proof of the conicity of polyhedral sets.

\begin{definition}
Let $S$ be a closed convex subset of $\R^n$. For a given $x \in S$ and $r>0$, let
\[K\coloneqq (S \cap B_r(x))-x.\]
We say $S$ is \textit{locally conic} at a point $x \in S$ if there exists \(r>0\) such that
\[K=B_r(0) \cap \left(\bigcup_{\lambda \geq 0} \lambda K \right).\]
\end{definition}

Intuitively, consider a disk sector $S$ shown in Figure \ref{fig:locallyconic}, $S$ is locally conic at point $A$ and $B$, but not locally conic at point $C$ and $D$.

\begin{figure}[!ht]
    \centering
    \includegraphics[width = 10cm]{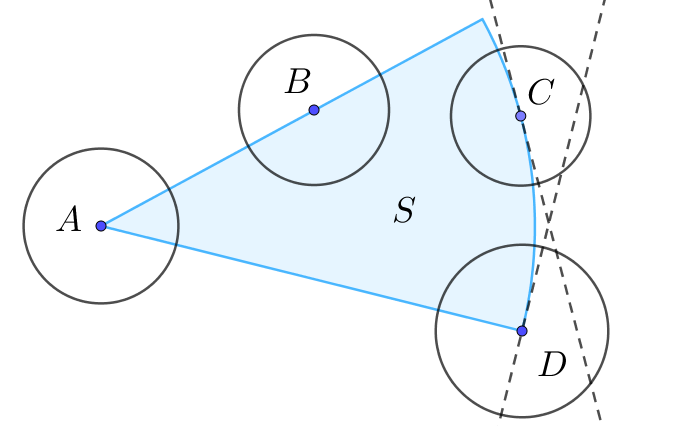}
    \caption{Geometrical interpretation of local conicity. }
    \label{fig:locallyconic}
\end{figure}

\begin{lemma}\label{PolyhedralLocallyCone}
A polyhedral set \(S \subseteq \R^n\) is locally conic at every point $x \in S$.
\end{lemma}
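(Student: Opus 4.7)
The plan is to exploit the description of a polyhedral set as a finite intersection of closed half-spaces and then, at a fixed point $x \in S$, separate the half-space constraints into those that are \emph{active} at $x$ (i.e.\ satisfied with equality) and those that are \emph{inactive} (satisfied with strict inequality). Write
\[
S = \bigcap_{i=1}^{m} H_i, \qquad H_i = \{y \in \R^n \,|\, \langle a_i, y \rangle \leq b_i\},
\]
and set $I(x) \coloneqq \{i \,|\, \langle a_i, x\rangle = b_i\}$. My first step will be to use continuity of each linear functional $\langle a_i, \cdot\rangle$ to choose $r>0$ small enough that $B_r(x) \subseteq H_i$ for every $i \notin I(x)$; concretely, $r$ can be taken as the minimum of $(b_i - \langle a_i, x\rangle)/\|a_i\|$ over the (finitely many) inactive indices.

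Next, I would introduce the polyhedral cone
\[
C \coloneqq \bigcap_{i \in I(x)} \{y \in \R^n \,|\, \langle a_i, y\rangle \leq 0\},
\]
which is closed under non-negative scaling and contains $0$. With this choice of $r$, I would verify by direct translation of the defining inequalities that
\[
K = (S \cap B_r(x)) - x = B_r(0) \cap C.
\]
Indeed, $y \in K$ iff $\|y\|\le r$ and $\langle a_i, y\rangle \le b_i - \langle a_i, x\rangle$ for every $i$; the choice of $r$ makes the inactive constraints automatic, while the active ones reduce to $\langle a_i, y\rangle \le 0$.

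Finally, I would compute $\bigcup_{\lambda \ge 0} \lambda K$. For $\lambda > 0$, $\lambda K = \lambda(B_r(0) \cap C) = B_{\lambda r}(0) \cap C$ because $C$ is a cone, and the union over $\lambda \ge 0$ fills out all of $C$. Intersecting back with $B_r(0)$ gives
\[
B_r(0) \cap \bigcup_{\lambda \ge 0} \lambda K = B_r(0) \cap C = K,
\]
which is precisely the local conicity condition.

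I do not expect a serious obstacle in this argument; it is essentially a bookkeeping exercise with finitely many linear inequalities, and the only mildly delicate point is making the quantitative choice of $r$ explicit so that the inactive half-space constraints become vacuous on $B_r(x)$. All other steps are a direct translation of the definition of $K$ and the cone property of $C$.
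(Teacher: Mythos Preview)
Your proposal is correct and follows essentially the same route as the paper: separate the half-space constraints into active and inactive at $x$, pick $r$ small enough to make the inactive ones vacuous on $B_r(x)$, and then verify the defining identity for local conicity. The only cosmetic difference is that you name the tangent cone $C=\bigcap_{i\in I(x)}\{y\,|\,\langle a_i,y\rangle\le 0\}$ and show $K=B_r(0)\cap C$ directly, whereas the paper rewrites $S\cap B_r(x)$ as a union of segments from $x$ to $S\cap\bdry B_r(x)$; both finish in one line from there.
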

\begin{proof}
We know any polyhedral set $S$ can be written as
\begin{equation*}
    S=\bigcap_{i=1}^m H_i, 
\end{equation*}
where $H_i = \{x \in \R^n \,|\, \langle a_i\,,x \rangle \leq b_i\}$, \(a_1,\dots,a_m \in \R^n\) and \(a_i \neq 0\) for all \(1 \leq i \leq m\), \(b_1,\dots,b_m \in \R\). For a given point $x \in S$, if $x \in \mbox{int}\, S$, \(S\) is locally conic at $x$ trivially. Otherwise, assume $x \in \bdry S$, after relabeling the index of $H_i$'s, there exists an integer $j$ such that
\[\langle a_i\,,x \rangle = b_i,\,\forall\, 1\leq i \leq j \quad\mbox{and}\quad \langle a_i\,, x \rangle < b_i, \,\forall\, j<i \leq m.\]
Since \(m\) is finite, we can choose
\[0<r<\mbox{min}\left\{ \frac{|\langle a_i\,,\,x \rangle - b_i|}{\|a_i\|}, j<i \leq m \right\},\]
thus for all \( y \in S \cap B_r(x)\), \(\langle a_i\,,\,y \rangle < b_i\) for all \( j<i \leq m\). Further, if we let $C = S \cap \bdry B_r(x)$, we can conclude
\begin{equation}\label{BrxS}
    S \cap B_r(x) = \bigcup_{y \in C} [x,y].
\end{equation}
Let \(K=(S \cap B_r(x))-x\). It is sufficient to check whether \(K \supseteq B_r(0) \cap (\bigcup_{\lambda \geq 0} \lambda K)\). Indeed, by (\ref{BrxS}) we have $\bigcup_{0 \leq \lambda \leq 1} \lambda K = K \subseteq B_r(0)$, and \(B_r(0) \cap (\bigcup_{\lambda > 1} \lambda K) = B_r(0) \cap K\). Therefore, \(S\) is locally conic at $x$.
\end{proof}

Now we are ready to apply Theorem \ref{main} to polyhedral sets in $\R^2$ and analyze the convergence performance of CRM.

\begin{theorem}\label{poly_finite}
Suppose \(A,B \subseteq \R^2\) are polyhedral sets and \(A \cap B \neq \varnothing\). Then for any initial guess \(x \in \R^2\), the sequence generated by CRM converges to a point in \(A \cap B\) finitely.
\end{theorem}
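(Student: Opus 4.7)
The plan is to combine three ingredients: convergence of CRM on closed convex sets (\cite[Theorem 3]{BCS2021}), the local conicity of polyhedral sets (Lemma \ref{PolyhedralLocallyCone}), and the three-step termination on closed convex cones in $\R^2$ (Theorem \ref{main}). First, by \cite[Theorem 3]{BCS2021}, the CRM sequence $(x_k)_{k\in\N}$ with $x_0=x$ converges to some $x^*\in A\cap B$. Writing $A$ and $B$ as intersections of finitely many half-spaces and keeping only those active at $x^*$ yields closed convex cones $K_A, K_B$ with apex $0$ and a radius $r>0$ such that $A\cap B_r(x^*)=(x^*+K_A)\cap B_r(x^*)$, $B\cap B_r(x^*)=(x^*+K_B)\cap B_r(x^*)$, and, globally, $A\subseteq x^*+K_A$ and $B\subseteq x^*+K_B$. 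All of this can be read off from the proof of Lemma \ref{PolyhedralLocallyCone}.

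The key identification is that, for every $v\in B_r(x^*)$, the CRM operator $C_T$ for $(A,B)$ equals the CRM operator for $(x^*+K_A,\,x^*+K_B)$. Indeed, $P_{x^*+K_A}$ fixes $x^*$ and is non-expansive, so $P_{x^*+K_A}(v)\in B_r(x^*)\cap(x^*+K_A)=B_r(x^*)\cap A\subseteq A$. Since $A\subseteq x^*+K_A$, the optimality of $P_{x^*+K_A}(v)$ over the larger set $x^*+K_A$ forces $P_A(v)=P_{x^*+K_A}(v)$, whence $R_A(v)=R_{x^*+K_A}(v)$. Non-expansiveness of the reflection then puts $R_A(v)$ in $B_r(x^*)$, and the analogous argument for $B$ applied at $R_A(v)$ yields $R_B(R_A(v))=R_{x^*+K_B}(R_A(v))$. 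The three points defining $C_T(v)$ therefore coincide in the two settings.

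Next, since $x_k\to x^*$, pick $N$ with $\|x_N-x^*\|<r$. From the proof of Theorem \ref{main} one extracts the sharper statement $\|C_T^{\mathrm{cone}}(u)\|\leq\|u\|$ for the cone CRM on $(K_A,K_B)$: case by case, $C_T^{\mathrm{cone}}(u)$ is either $u$ itself, $0$, a projection of $u$ onto $K_A$ or $K_B$, or a projection of $R_{K_A}(u)$ onto $K_B$, and each is bounded by $\|u\|$ via non-expansiveness of the projections and Corollary \ref{PreserveNorm}. Translating the apex to $x^*$, the CRM operator for $(x^*+K_A,\,x^*+K_B)$ never increases the distance to $x^*$, so from $x_N$ every cone-CRM iterate stays in $B_r(x^*)$. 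The identification above then forces the polyhedral CRM trajectory from $x_N$ to coincide with the cone CRM trajectory, and Theorem \ref{main} produces a point of $(x^*+K_A)\cap(x^*+K_B)\cap B_r(x^*)=A\cap B\cap B_r(x^*)$ within at most three additional iterations, proving finite convergence.

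The main delicate point is the joint verification that (i) each reflection arising during a cone-CRM iteration lands in $B_r(x^*)$ so that it agrees with the corresponding polyhedral reflection, and (ii) the cone-CRM iterates themselves stay in $B_r(x^*)$ so that this agreement persists past a single step. Both facts reduce to the non-expansiveness of projections and reflections about their fixed point $x^*$, combined with the norm-contraction property of $C_T^{\mathrm{cone}}$ distilled from the proof of Theorem \ref{main}.
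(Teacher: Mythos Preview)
Your proof is correct and follows essentially the same route as the paper: invoke \cite[Theorem~3]{BCS2021} to obtain a limit $x^*\in A\cap B$, use the local conicity of polyhedral sets at $x^*$ (Lemma~\ref{PolyhedralLocallyCone}) to identify the CRM step near $x^*$ with the CRM step for the tangent cones $x^*+K_A$, $x^*+K_B$, and finish with Theorem~\ref{main}. The one minor difference is that the paper keeps the tail of $(x_n)$ inside $B_r(x^*)$ directly from convergence, whereas you bootstrap this from the norm-nonincreasing property of the cone CRM extracted from the case analysis of Theorem~\ref{main}; your version is more explicit on the identification step (in particular the global containment $A\subseteq x^*+K_A$), but the overall architecture is the same.
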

\begin{proof}
Since $A, B$ are polyhedral sets, $A \cap B$ is a polyhedral set as well. By \cite[Theorem 3]{BCS2021}, for a given \(x \in \R^2\), let $x_n = C_T^n(x)$, then sequence \(\{x_n\}\) converges to a point \(x^* \in A \cap B\). According to Lemma \ref{PolyhedralLocallyCone}, \(A \cap B\) is locally conic at \(x^*\). Suppose the associated radius is \(r\). Then there exists a finite \(N \in \mathbb{N}\) such that \(x_n \in B_r(x^*)\) for all \(n>N\). From this stage, we have 
\[R_A(x_n) = R_{A\cap B_r(x^*)}(x_n) \quad\mbox{and}\quad R_B(R_A(x_n)) = R_{B\cap B_r(x^*)}(R_{A\cap B_r(x^*)}(x_n)).\]
In other word, CRM iterations for $A\cap B$ and $(A\cap B_r(x^*))\cap (B\cap B_r(x^*))$ are indistinguishable. Consider the problem:
\begin{equation*}
    \mbox{find}\; y \in ((A\cap B_r(x^*))-x^*) \cap ((B\cap B_r(x^*))-x^*).
\end{equation*}
Clearly, both sets \(((A\cap B_r(x^*))-x^*)\) and \(((B\cap B_r(x^*))-x^*)\) are closed, convex, and locally conic at 0. By Theorem \ref{main}, we  conclude that \(\{x_n\}\) converges to \(x^*\) in no more than \(N+3\) steps.
\end{proof}

The above theorem can be extended to higher dimensional cases if all the reflections are in the same plane.


\begin{corollary}\label{cor:wedge1}
    Suppose $A,B \subseteq \R^n$, $n \geq 3$ and there is a two-dimensional subspace $L_0 \subseteq \R^n$ with
    \[A = A_0 \oplus M \quad\mbox{and}\quad B = B_0 \oplus M,\]
    where $A_0,B_0$ are polyhedral sets in $L_0$, $A_0 \cap B_0 \neq \varnothing$, and $M$ is a linear subspace of $\R^n$ such that $M \perp L$. Then for any initial guess \(x \in \R^n\), the sequence generated by CRM converges to a point in \(A \cap B\) finitely.
\end{corollary}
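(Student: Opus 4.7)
The plan is to reduce the CRM iteration on $A, B$ in $\R^n$ to the 2D CRM iteration on $A_0, B_0$ in $L_0$, where Theorem \ref{poly_finite} already gives finite convergence. Interpreting the hypothesis in the natural wedge-like sense (so that $\R^n = L_0 \oplus M$), I would write any $x \in \R^n$ uniquely as $x = x_{L_0} + x_M$ with $x_{L_0} \in L_0$ and $x_M \in M$. Since $M \perp L_0$, Pythagoras gives
\[\|x - (a_0 + m)\|^2 = \|x_{L_0} - a_0\|^2 + \|x_M - m\|^2\]
for every $a_0 \in A_0$ and $m \in M$. Minimizing each summand separately yields $P_A(x) = P_{A_0}(x_{L_0}) + x_M$, and hence $R_A(x) = R_{A_0}(x_{L_0}) + x_M$. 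The same decomposition holds for $B$.

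Applying the two reflections in sequence, the three CRM-defining points
\[x, \quad R_A(x) = R_{A_0}(x_{L_0}) + x_M, \quad R_B(R_A(x)) = R_{B_0}(R_{A_0}(x_{L_0})) + x_M\]
all share the $M$-component $x_M$, so they lie in the translated 2-plane $x_M + L_0$. I would then argue that the circumcenter is covariant under the translation by $x_M$: in the non-collinear case, the unique point in $\aff\{x, R_A(x), R_B(R_A(x))\}$ equidistant to the three points is $C_T^{L_0}(x_{L_0}) + x_M$, where $C_T^{L_0}$ denotes the CRM operator for $A_0, B_0$ viewed in $L_0$. The identity $C_T(x) = C_T^{L_0}(x_{L_0}) + x_M$ persists through the degenerate cardinality-$1$ and cardinality-$2$ cases, because the triple in $\R^n$ and its $L_0$-projection have the same cardinality (the $M$-component is common to all three points), so the midpoint and fixed-point rules used in the proof of Theorem \ref{main} apply in parallel.

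By induction on $k$, $C_T^k(x) = (C_T^{L_0})^k(x_{L_0}) + x_M$ for every $k \geq 0$. Since $A_0, B_0$ are polyhedral sets in the 2D space $L_0$ with $A_0 \cap B_0 \neq \varnothing$, Theorem \ref{poly_finite} supplies a finite $N \in \N$ and a point $x^*_0 \in A_0 \cap B_0$ with $(C_T^{L_0})^N(x_{L_0}) = x^*_0$. A quick check using $L_0 \perp M$ shows $A \cap B = (A_0 \cap B_0) \oplus M$, so $C_T^N(x) = x^*_0 + x_M \in A \cap B$, as required. The main obstacle is the uniform handling of the degenerate triples in the circumcenter identity of the middle step; once this is in hand, the finite convergence transfers immediately from $L_0$ to $\R^n$.
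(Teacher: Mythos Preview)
Your approach is correct and essentially the same as the paper's: both reduce the CRM iteration on $A,B$ to the 2D iteration on $A_0,B_0$ via the orthogonal splitting $\R^n = L_0 \oplus M$ and then invoke Theorem~\ref{poly_finite}. Your execution is, if anything, cleaner---you use translation covariance of the circumcenter and explicitly treat the degenerate (cardinality $\leq 2$) triples, whereas the paper phrases the reduction through an auxiliary two-plane $L$ containing $x$ and perpendicular to $M$.
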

\begin{proof}
    For a given \(x \in \R^n\), there always exists a two-dimensional subspace $L \subseteq \R^n$ which contains $x$ and $L \perp M$. By \cite[Corollary 3.22]{BauschkeHeinz2017}, for any $u,v \in M$, we have
    \[\langle u-v \,,\, x-P_{M}(x)\rangle = 0,\]
    i.e., $x-P_{M}(x)$ is perpendicular to $M$, $P_{M}(x) \in L$. Let $A_L = A \cap L$ and $B_L = B\cap L$, we can conclude that $A_L, B_L$ are polyhedral sets in $L$. Let $x=(x_L,x_M)$, where $x_L \in L$ and $x_M \in M$, we have $P_A(x) = (P_{A_L}(x), P_{M}(x))$. Since $P_{M}(x) \in L$, we have $P_A(x) = (P_{A_L}(x), x_M) \in L$. Similarly, $P_B(x) = (P_{B_L}(x), x_M) \in L$ as well. Therefore, $A_L, B_L, x, P_A(x), P_B(x)$ lie in the same two-dimensional subspace $L$. By Theorem~\ref{poly_finite}, we can conclude that $x_L$ converges to a point in $A_L \cap B_L$ finitely. Therefore, CRM finds a point in $A\cap B$ from $x$ finitely.
\end{proof}

Further, the direct sum can be relaxed to the Minkowski sum. 

\begin{corollary}\label{cor:wedge2}
    Suppose $A,B \subseteq \R^n$ are full dimensional, $A\cap B \neq \varnothing$ and $n \geq 3$. If there exists a linear subspace $M$ of $\R^n$ with dimension $n-2$ such that
    \[A = A_0 + M \quad\mbox{and}\quad B = B_0 + M,\]
    where $A_0, B_0$ are two-dimensional polyhedral sets, $A_0, B_0 \nsubseteq M$. Then for any initial guess \(x \in \R^n\), the sequence generated by CRM converges to a point in \(A \cap B\) finitely.
\end{corollary}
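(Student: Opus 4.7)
The natural strategy is to reduce Corollary \ref{cor:wedge2} to Corollary \ref{cor:wedge1} by orthogonalizing the Minkowski decomposition with respect to $M$. I would set $L \coloneqq M^{\perp}$, which is a $2$-dimensional subspace of $\R^n$ with $\R^n = L \oplus M$, and define $\tilde A_0 \coloneqq P_L(A_0)$ and $\tilde B_0 \coloneqq P_L(B_0)$, where $P_L$ denotes the orthogonal projection onto $L$. The goal is to verify that $A = \tilde A_0 \oplus M$ and $B = \tilde B_0 \oplus M$, with $\tilde A_0, \tilde B_0$ polyhedral in $L$ and $\tilde A_0 \cap \tilde B_0 \neq \varnothing$, which places us precisely in the setting of Corollary \ref{cor:wedge1}.

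For the decomposition, I would split each $a_0 \in A_0$ as $a_0 = P_L(a_0) + P_M(a_0)$ with $P_M(a_0) \in M$. Any element $a_0 + m \in A$ then satisfies $a_0 + m = P_L(a_0) + (P_M(a_0) + m) \in \tilde A_0 + M$, and conversely any $P_L(a_0) + m \in \tilde A_0 + M$ equals $a_0 + (m - P_M(a_0)) \in A$. Since $\tilde A_0 \subseteq L$ and $L \cap M = \{0\}$, the sum is direct, and the same argument handles $B$. Polyhedrality of $\tilde A_0$ and $\tilde B_0$ in $L$ then follows from the classical fact (via Fourier--Motzkin elimination, see \cite{rockafellar1970convex}) that the linear image of a polyhedral set is polyhedral.

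For the nonemptiness of the intersection, I would take any $x \in A \cap B$ and observe that $P_L(A) = P_L(\tilde A_0 + M) = \tilde A_0 + P_L(M) = \tilde A_0$, and similarly $P_L(B) = \tilde B_0$, so $P_L(x) \in \tilde A_0 \cap \tilde B_0$. Applying Corollary \ref{cor:wedge1} to $A = \tilde A_0 \oplus M$ and $B = \tilde B_0 \oplus M$ then delivers finite convergence of CRM from any initial guess. I do not foresee a serious obstacle; the only points needing care are the polyhedrality of the projections (classical) and the nontriviality of $\tilde A_0, \tilde B_0$ in $L$, which is ensured by the hypotheses $A_0, B_0 \nsubseteq M$ together with full-dimensionality of $A$ and $B$.
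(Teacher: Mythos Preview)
Your proposal is correct and follows essentially the same route as the paper: both take $L_0 = M^{\perp}$, rewrite $A$ and $B$ as direct sums of polyhedral subsets of $L_0$ with $M$, and invoke Corollary~\ref{cor:wedge1}. The only cosmetic difference is that you build the $L_0$-slices via projection, $\tilde A_0 = P_{L_0}(A_0)$, while the paper uses intersection, $A_1 = A \cap L_0$; since $A = A_0 + M$ and $L_0 = M^{\perp}$, these two sets coincide.
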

\begin{proof}
    According to the conditions of $A,B$, we can conclude there exists a two-dimensional subspace $L_0 \subseteq \R^n$ such that $L_0 \perp M$. Let $A_1 = A \cap L_0$ and $B_1 = B \cap L_0$, we can write
    \[A = A_1 \oplus M \quad\mbox{and}\quad B = B_1 \oplus M,\]
    where $A_1, B_1 \subseteq L_0$ are polyhedral sets, $A_1 \cap B_1 \neq \varnothing$. Then the statement follows from Corollary~\ref{cor:wedge1}.
\end{proof}

\section{Finite convergence of CRM on proper polyhedral cones in \(\R^3\)} \label{FR3}


In this section, we examine the convergence properties of two proper polyhedral cones in $\R^3$. A cone $C \subseteq \mathbb{R}^n$ is called \textit{proper} if it is convex, closed, pointed (i.e., $C \cap -C = \{0\}$) and solid (i.e., $\mbox{int}\,C \neq \varnothing$). To analyze these properties, we project the proper polyhedral cones onto 
$\mathbb{S}^2$ and adapt the method used in the previous two sections.

\subsection{From $\R^3$ to $\mathbb{S}^2$}

We first establish a connection between the unit sphere \(\mathbb{S}^2\) and \(\mathbb{R}^3\). Let \(A' = A \cap \mathbb{S}^2\) and \(B' = B \cap \mathbb{S}^2\). To perform iterations with respect to $A'$ and $B'$, we modify CRM into a new technique that we call the Sphere-Centered Reflection Method (SRM), which allows us to replicate CRM's performance on \(\mathbb{S}^2\). We then examine the convergence properties of SRM and relate them to CRM.

\par In the following, we first present the preliminaries leading to the development of SRM. A set \(H_S\) is called a \textit{half-sphere} of \(\mathbb{S}^2\) if there exists a closed half-space \(H \subseteq \mathbb{R}^3\) such that \(0 \in H\) and \(H_S = H \cap \mathbb{S}^2\). For points \(x, y \in \mathbb{S}^2\), the \textit{geodesic} is the shorter arc of the great circle passing through \(x\) and \(y\), denoted \(\geo(x, y)\). If \(x\) and \(y\) are antipodal points, there are infinitely many geodesics between them, and \(\geo(x, y)\) represents the set containing all such geodesics. The length of a geodesic is \(|\geo(x, y)|\). Similar to $\R^3$, the \textit{geodesic distance} between points $x$ and $y$ in $\mathbb{S}^2$ is
\[d^g(x,y) \coloneqq \arccos\, \langle x,y\rangle = |\geo (x,y)|,\]
Recall that there is a classic metric exists on the sphere with geodesics:


\begin{proposition}
    Let $\mathbb{S}^2$ be the unit sphere, $x,y \in \mathbb{S}^2$. Suppose $d^g(x,y)$ is the geodesic distance between $x$ and $y$, then $(\mathbb{S}^2,d^g)$ is a metric space.
\end{proposition}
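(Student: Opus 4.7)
The plan is to verify the four axioms of a metric in turn, namely non-negativity, identity of indiscernibles, symmetry, and the triangle inequality. The first three are essentially free from the definition $d^g(x,y) = \arccos\langle x,y\rangle$: since $x,y \in \mathbb{S}^2$ means $\langle x,y\rangle \in [-1,1]$, we have $d^g(x,y) \in [0,\pi]$, symmetry follows from symmetry of the inner product, and $d^g(x,y)=0$ forces $\langle x,y\rangle = 1$, hence $\|x-y\|^2 = 2 - 2\langle x,y\rangle = 0$, i.e.\ $x=y$. So I would dispatch these in a single short paragraph.

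The real work is the triangle inequality $d^g(x,z) \le d^g(x,y) + d^g(y,z)$. Writing $\alpha = d^g(x,y)$, $\beta = d^g(y,z)$, $\gamma = d^g(x,z)$, I plan to split on whether $\alpha + \beta \ge \pi$ or not. In the first case there is nothing to prove because $\gamma \le \pi \le \alpha + \beta$. In the remaining case $\alpha + \beta \in [0,\pi)$, and since $\arccos$ is decreasing on $[-1,1]$, the inequality $\gamma \le \alpha + \beta$ is equivalent to
\begin{equation*}
\langle x,z\rangle \;\ge\; \cos(\alpha+\beta) \;=\; \langle x,y\rangle\langle y,z\rangle - \sin\alpha\sin\beta,
\end{equation*}
i.e.\ $\langle x,z\rangle - \langle x,y\rangle\langle y,z\rangle \ge -\sqrt{1-\langle x,y\rangle^2}\sqrt{1-\langle y,z\rangle^2}$.

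To obtain this bound I would apply a Gram–Schmidt style decomposition and Cauchy–Schwarz: set $u \coloneqq x - \langle x,y\rangle y$ and $v \coloneqq z - \langle y,z\rangle y$, both of which are orthogonal to $y$. A short computation gives $\langle u,v\rangle = \langle x,z\rangle - \langle x,y\rangle\langle y,z\rangle$, $\|u\|^2 = 1 - \langle x,y\rangle^2$ and $\|v\|^2 = 1 - \langle y,z\rangle^2$, and then $\langle u,v\rangle \ge -\|u\|\|v\|$ from Cauchy–Schwarz delivers precisely the required inequality. I expect this Cauchy–Schwarz step to be the main obstacle, but only in the sense of being the one nontrivial identification; everything else is bookkeeping. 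An alternative route I would mention only if space permits is to invoke the spherical law of cosines directly, but the Cauchy–Schwarz argument seems cleaner and avoids appealing to an external formula.
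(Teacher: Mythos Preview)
Your argument is correct. The three easy axioms are handled cleanly, and your proof of the triangle inequality is valid: the case split on $\alpha+\beta \ge \pi$ is sound, and in the remaining case the reduction via monotonicity of $\cos$ on $[0,\pi]$ to the inequality $\langle x,z\rangle - \langle x,y\rangle\langle y,z\rangle \ge -\sqrt{1-\langle x,y\rangle^2}\sqrt{1-\langle y,z\rangle^2}$ is exactly right. The Gram--Schmidt vectors $u = x - \langle x,y\rangle y$ and $v = z - \langle y,z\rangle y$ do satisfy $\langle u,v\rangle = \langle x,z\rangle - \langle x,y\rangle\langle y,z\rangle$, $\|u\|^2 = 1-\langle x,y\rangle^2$, $\|v\|^2 = 1-\langle y,z\rangle^2$, and Cauchy--Schwarz finishes it.

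Your approach differs substantially from the paper's: the paper does not prove this proposition at all, but simply cites Petersen's \emph{Riemannian Geometry} (Chapters~1 and~5), treating the statement as a standard fact about the round sphere as a Riemannian manifold. What you gain is a fully self-contained, elementary argument that uses nothing beyond the Euclidean inner product and Cauchy--Schwarz, and which in fact works verbatim for $\mathbb{S}^{n-1}$ in any real inner-product space; what the citation buys is brevity and the contextual placement of $(\mathbb{S}^2,d^g)$ within Riemannian geometry, where the result is immediate once one knows geodesics are great-circle arcs. For the purposes of this paper either is adequate, but your direct proof is arguably more in keeping with the elementary spirit of the surrounding arguments.
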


\begin{proof}
See \cite[Chapter 1 \& 5]{petersen2006riemannian}.
\end{proof}

Before introducing SRM, we first define convex sets and polyhedral sets in \(\mathbb{S}^2\), followed by the definitions of the projection and reflection operators in \(\mathbb{S}^2\).

\par Let \(C \subseteq \mathbb{S}^2\) be a nonempty set. The set \(C\) is called \textit{geodesically convex} in \(\mathbb{S}^2\) if, for all \(x, y \in C\), there exists a unique \(\geo(x, y) \subseteq C\). Furthermore, \(C\) is a \textit{polyhedral set in} \(\mathbb{S}^2\) if it can be expressed as the intersection of a finite number of half-spheres in \(\mathbb{S}^2\). With these definitions, we can now describe the relationship between a proper polyhedral cone in \(\mathbb{R}^3\) and a geodesically convex polyhedral set in \(\mathbb{S}^2\).





\begin{lemma}\label{CprimePoly}
    Suppose $C \subseteq \R^3$ is a proper polyhedral cone and let $C' = C\cap \mathbb{S}^2$. Then $C'$ is a geodesically convex polyhedral set in $\mathbb{S}^2$.
\end{lemma}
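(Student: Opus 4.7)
The plan is to unpack the two claims of the lemma — that $C'$ is polyhedral in $\mathbb{S}^2$ and that $C'$ is geodesically convex — and verify each separately, using the cone, convexity, and pointedness properties of $C$ in turn.

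For polyhedrality, I would start with the half-space representation of $C$. Since $C$ is polyhedral, $C = \bigcap_{i=1}^m H_i$ for finitely many closed half-spaces, and because $C$ is a cone with $0 \in C$, each $H_i$ can be taken with boundary hyperplane through the origin: concretely, $H_i = \{x \in \R^3 \mid \langle a_i, x\rangle \leq 0\}$ for some nonzero $a_i \in \R^3$. (If an original representation had $\langle a_i, x\rangle \leq b_i$ with $b_i > 0$, the cone property forces $\langle a_i, x\rangle \leq 0$ on $C$, so the bound can be tightened to $0$.) With $0 \in H_i$, each $H_i \cap \mathbb{S}^2$ is a half-sphere in the sense defined in the paper, and therefore
\[C' = C \cap \mathbb{S}^2 = \bigcap_{i=1}^m (H_i \cap \mathbb{S}^2)\]
is a finite intersection of half-spheres, i.e.\ a polyhedral set in $\mathbb{S}^2$.

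For geodesic convexity, I would pick $x, y \in C'$ and first rule out the antipodal case: pointedness of $C$ means $C \cap (-C) = \{0\}$, so we cannot have $y = -x$, since then both $x$ and $-x$ would be nonzero elements of $C$. Hence there is a unique geodesic $\geo(x,y)$, and it can be parametrized as the normalization of the line segment $[x,y]$:
\[\gamma(t) = \frac{(1-t)x + ty}{\|(1-t)x + ty\|}, \qquad t \in [0,1],\]
with the denominator never vanishing (again because $y \neq -x$). Convexity of $C$ gives $(1-t)x + ty \in C$ for every $t \in [0,1]$, and the cone property then gives $\gamma(t) \in C$; since $\gamma(t) \in \mathbb{S}^2$ by construction, we conclude $\gamma(t) \in C'$, so $\geo(x,y) \subseteq C'$.

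The only delicate point is the first step — justifying that the half-space representation of $C$ can be chosen with all boundary hyperplanes through the origin — because the definition of a polyhedral set in $\mathbb{S}^2$ given in the paper requires intersections of half-spheres, not of arbitrary spherical caps. Everything else is routine manipulation of the convex-cone structure, with pointedness entering solely to ensure the uniqueness of the geodesic between any two points of $C'$; closedness and solidity of $C$ are inherited by $C'$ but are not invoked in the argument.
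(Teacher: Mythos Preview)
Your proposal is correct and follows essentially the same approach as the paper: write $C$ as a finite intersection of half-spaces through the origin and intersect with $\mathbb{S}^2$ for polyhedrality, and use pointedness to exclude antipodal pairs plus convexity and the cone property to show the geodesic stays in $C'$. The only cosmetic difference is that you parametrize the geodesic explicitly as $\gamma(t) = \dfrac{(1-t)x+ty}{\|(1-t)x+ty\|}$, whereas the paper argues more geometrically via the plane $H_{x,y}$ through $x$, $y$, and the origin; your version is arguably the more transparent of the two, since it makes explicit the step ``cone property $\Rightarrow$ normalized segment lies in $C$'' that the paper leaves implicit.
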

\begin{proof}
    Suppose $x,y \in C' \subseteq C$, $x \neq y$. Since $C$ is proper and thus pointed, $x$ and $y$ cannot be antipodal points. Since $C$ is proper and thus convex, the line segment $[x,y] \subseteq C$. Let $H_{x,y}$ be the plane passing through $x,y$ and the origin, then $H_{x,y} \cap \mathbb{S}^2$ is a great circle of $\mathbb{S}^2$. Therefore, $\geo (x,y)  \subseteq H_{x,y} \cap C$, and then $\geo (x,y) \subseteq C'$, $C'$ is geodesically convex. 
    \par To see $C'$ is a polyhedral set in $\mathbb{S}^2$, since $C$ can be written as
    \[C = \bigcap_{i=1}^n H_i, \;\mbox{where}\; H_i = \{x \in \R^3 \,|\, \langle a_i,\,x \rangle \leq 0 \}, a_i \in \R^3, a_i \neq 0, n \geq 3,\]
    each $H_i \cap \mathbb{S}^2$ is a closed half-sphere of $\mathbb{S}^2$, here we require $n \geq 3$ to make sure $C$ is pointed. After taking finite intersection, we can see $C'$ is a polyhedral set in $\mathbb{S}^2$.
\end{proof}

To transfer from $\R^3$ to $\mathbb{S}^2$, we need to redefine the projection in $(\mathbb{S}^2,d^g)$. Let set \(C \subseteq \mathbb{S}^2\) be nonempty and \(x \in \mathbb{S}^2\). The \textit{distance} of $x$ to $C$ in $\mathbb{S}^2$ is 
\[d_C^g (x)\coloneqq{\rm inf}_{y \in C} \, d^g(x,y).\]

Suppose \(C \subseteq \mathbb{R}^3\) is a proper polyhedral cone, and let \(C' = C \cap \mathbb{S}^2\). If \(x \in (\mathbb{R}^3 \setminus C^{\ominus}) \cap \mathbb{S}^2\), let \(p = P_C(x)\). Then \(p' \coloneqq [p] \cap \mathbb{S}^2 \in C'\) is the \textit{projection} of \(x\) onto \(C'\) in \(\mathbb{S}^2\), and we denote this projection by \(P^g_{C'}(x)\). The \textit{reflection} associated with \(C'\) in \(\mathbb{S}^2\) is then defined as \(R_{C'}^g \coloneqq R_C\).

\par It is worth mentioning that here we define the reflection operator \(R_{C'}^g\) independently of \(P^g_{C'}\). According to Corollary~\ref{PreserveNorm}, reflection over a cone in \(\mathbb{R}^3\) preserves the norm, allowing us to conclude that \(R_{C'}^g(x) \in \mathbb{S}^2\) when \(x \in (\mathbb{R}^3 \setminus C^{\ominus}) \cap \mathbb{S}^2\). To conclude this subsection, we show that in \(\mathbb{S}^2\), the distance between \(x\) and \(P_{C'}^g(x)\) is equal to the distance between \(P_{C'}^g(x)\) and \(R_{C'}^g(x)\), mirroring the behavior in \(\mathbb{R}^3\).

\begin{proposition}\label{dPCdRC}
    Suppose $C \subseteq \R^3$ is a proper polyhedral cone and let $C' = C\cap \mathbb{S}^2$. Suppose $x \in (\R^3 \setminus C^{\ominus}) \cap \mathbb{S}^2$, then $d^g(P^g_{C'}(x), R_{C'}^g(x))= d^g(x,P^g_{C'}(x)) = d_{C'}^g (x)$.
\end{proposition}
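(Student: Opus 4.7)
The plan is to pull the whole statement back to $\R^3$ using the identity $d^g(a,b)=\arccos\langle a,b\rangle$ for $a,b\in\mathbb{S}^2$, and then leverage the Euclidean projection characterization already developed in the preliminaries. Write $p\coloneqq P_C(x)$ and $r\coloneqq\|p\|$, so $p'=P^g_{C'}(x)=p/r$ (I would first note that $r>0$: if $p=0$ then Theorem~\ref{ProjectionTheorem} applied at $p=0$ would force $\langle z,x\rangle\le0$ for all $z\in C$, i.e.\ $x\in C^\ominus$, contradicting the hypothesis).

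Next I would compute the two right-hand inner products explicitly. By Lemma~\ref{orthogonality}, $\langle p,x\rangle=\|p\|^2=r^2$, hence $\langle p',x\rangle=r$. For the reflection, $R^g_{C'}(x)=R_C(x)=2p-x$, and a direct computation gives $\langle p',R^g_{C'}(x)\rangle=\tfrac{1}{r}(2\langle p,p\rangle-\langle p,x\rangle)=\tfrac{1}{r}(2r^2-r^2)=r$. Since Corollary~\ref{PreserveNorm} guarantees $R^g_{C'}(x)\in\mathbb{S}^2$, both geodesic distances collapse to $\arccos r$, giving the first equality $d^g(P^g_{C'}(x),R^g_{C'}(x))=d^g(x,P^g_{C'}(x))$.

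For the second equality $d^g(x,P^g_{C'}(x))=d^g_{C'}(x)$, I would argue minimality over $C'$. Since $\arccos$ is strictly decreasing, it suffices to show $\langle x,y\rangle\le r$ for every $y\in C'$. Pick any such $y$; then $y\in C$, so by the Euclidean projection inequality (Theorem~\ref{ProjectionTheorem}) we have $\langle y-p,x-p\rangle\le0$. Expanding and using $\langle p,x\rangle=r^2$ yields $\langle y,x\rangle\le\langle y,p\rangle=r\langle y,p'\rangle$, and then Cauchy--Schwarz together with $\|y\|=\|p'\|=1$ gives $\langle y,x\rangle\le r$, as required.

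I do not anticipate a serious obstacle here; the only subtle point is ensuring $p\neq 0$ so that $p'$ is well-defined, which is exactly why the hypothesis excludes $C^\ominus$. Everything else is a two-line Euclidean computation combined with the monotonicity of $\arccos$.
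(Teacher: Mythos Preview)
Your proof is correct, and in some respects tighter than the paper's. In particular, you justify $p\neq 0$ explicitly (the paper's definition of $P^g_{C'}$ tacitly assumes this but the proof there never verifies it), and your second equality is obtained by a direct minimality argument rather than by contradiction.

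The route, however, is genuinely different from the paper's. For the first equality the paper argues geometrically: from $\langle p,x-p\rangle=0=\langle p,y-p\rangle$ (with $y=R_C(x)$) it observes that $0,x,y,p'$ are coplanar and that the Euclidean chords $\|x-p'\|$ and $\|y-p'\|$ are equal, hence the geodesic arcs are equal. You instead compute the two inner products $\langle p',x\rangle$ and $\langle p',R_C(x)\rangle$ directly and compare after one application of $\arccos$; this bypasses the coplanarity step entirely. For the second equality the paper runs a contradiction: if some $q'\in C'$ were strictly closer in $d^g$, then unwinding through chord lengths would produce a point of $C$ closer to $x$ than $P_C(x)$. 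Your argument uses the projection inequality (Theorem~\ref{ProjectionTheorem}) plus Cauchy--Schwarz to get $\langle x,y\rangle\le r$ for every $y\in C'$ and then invokes the monotonicity of $\arccos$; this is shorter and avoids introducing the auxiliary point $q=P_{[q']}(x)$. Both approaches rest on the same two ingredients (Lemma~\ref{orthogonality} and Theorem~\ref{ProjectionTheorem}); yours packages them more algebraically, the paper's more pictorially.
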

\begin{proof}
    The proof can be divided into two parts. To prove the first equality, suppose $p = P_C(x)$, and let $p' = P^g_{C'}(x)$ and $y = R_C(x) = R_{C'}^g(x)$ for convenience. According to Lemma~\ref{orthogonality}, we have $\langle p \,, x-p \rangle = 0$, and can derive $\langle p \,, y-p \rangle = 0$ as well. Since $x,p$ and $y$ are colinear and $p'= [p] \cap \mathbb{S}^2$, then $0,x,y$ and $p'$ are coplanar, $\| x-p' \| = \| y-p' \|$. Since $x,y,p' \in \mathbb{S}^2$, we have $|\geo (x,p')| = |\geo (p',y)|$ as well, i.e., $d^g(x,P^g_{C'}(x)) = d^g(P^g_{C'}(x), R_{C'}^g(x))$.
    
    \par We prove the second equality by contradiction. If not, suppose $d_{C'}^g (x) = d^g (x,q')$, where $q' \in C'$ and $q' \neq P^g_{C'}(x)$. Let $p = P_C(x)$, $p' = P^g_{C'}(x)$, $q=P_{[q']}(x)$, $y = R_C(x) = R_{C'}^g(x)$, $z = R_{[q']}(x) = R_{\{q'\}}^g(x)$. Since $d_{C'}^g (x) = d^g (x,q')$, then by the first part of the proof,
    \[|\geo(x,z)| = 2|\geo(x,q')| \leq 2|\geo(x,p')| = |\geo(x,y)|,\]
    therefore
    \[\|x-z\| = 2\|x-q\| \leq 2\|x-p\| = \|x-y\|.\]
    However, since $p = P_C(x)$ and $q=P_{[q']}(x) \in C$, we must have $\|x-p\| \leq \|x-q\|$ and thus $p=q$. Therefore, we must have $q'=p'=P^g_{C'}(x)$ as well.
\end{proof}


\subsection{Sphere-centered reflection method (SRM)}

In this subsection, we introduce the Sphere-Centered Reflection Method. We begin by defining the sphere-center in \(\mathbb{S}^2\), followed by the definition of SRM.

\begin{definition}
    Suppose $x,y,z \in \mathbb{S}^2$ and they are not in the same great circle of $\mathbb{S}^2$. The \textit{sphere-center} of $x,y$ and $z$, which denoted as $S(x,y,z)$, satisfies
    \begin{itemize}
    \item \(S(x,y,z) \in V\coloneqq\{p \in \mathbb{S}^2\,|\, d^g(p,x) = d^g(p,y) = d^g(p,z)\},\) and
    \item $d^g(S(x,y,z),x) \leq d^g(p,x)$ for all $p \in V$.
\end{itemize}
\end{definition}

\par
Now we give the definition of SRM.
\begin{definition}\label{SRM}
    Suppose \(A,B \subseteq \R^3\) are proper polyhedral cones. Let $A'=A\cap \mathbb{S}^2$ and $B'= B \cap \mathbb{S}^2$. Let \(R^g_{A'}, R^g_{B'}\) be reflection operators associated with \(A', B'\) respectively in \(\mathbb{S}^2\). The \textit{sphere-centered reflection operator} $S_T$ is defined as
\begin{equation*}
    S_T(x)\coloneqq S(x,y,z),
\end{equation*}
where $x\in \mathbb{S}^2$ and $x\notin \Ker C_T \cap \mathbb{S}^2$, $y = R^g_{A'}(x), z = R^g_{B'}(R^g_{A'}(x))$.
\end{definition}

Note that $S_T(x)$ is well-defined when $x \notin \Ker C_T \cap \mathbb{S}^2$. Indeed, since $A^{\ominus}, B^{\ominus} \subseteq (A\cap B)^{\ominus}$ and $(A\cap B)^{\ominus} \subseteq \Ker C_T$, reflection operators \(R^g_{A'}, R^g_{B'}\) are well-defined by definition of reflection operators in $\mathbb{S}^2$. And since $x\notin \Ker C_T \cap \mathbb{S}^2$, $x,y$ and $z$ are not in the same great circle of $\mathbb{S}^2$ and then $S_T(x)$ exists.

\subsection{Locally finite convergence of SRM on geodesically convex polyhedral sets in $\mathbb{S}^2$ }

To describe a geodesically convex polyhedral set more precisely, suppose $C \subseteq \R^3$ is a proper polyhedral cone and let $C' = C\cap \mathbb{S}^2$. By the proof of Lemma \ref{CprimePoly}, there exists the smallest $n$ such that $C$ can be written as
    \[C = \bigcap_{i=1}^n H_i, \;\mbox{where}\; H_i = \{x \in \R^3 \,|\, \langle a_i,\,x \rangle \leq 0 \}, a_i \in \R^3, a_i \neq 0, n \geq 3,\]
here each $H_i$ is a plane passing through the origin. Let $H_i' =  H_i \cap \mathbb{S}^2$ be a closed half-sphere of $\mathbb{S}^2$, then $C'$ can be written as
\[C' = \bigcap_{i=1}^n H_i', \;\mbox{where}\; n \geq 3.\]
We define the set 
\[\overline{G}_{C'}\coloneqq \{H_1',\dots,H_n'\},\] 
where each $H_i'$ is a closed half-sphere. In addition, the geodesically convex polyhedral set $C'$ can also be written as
\[C' = \conv \{V_1,\dots,V_n\},\]
where $V_1,\dots,V_n$ are the vertices of \(C'\), and $\geo(V_1, V_2),\dots, \geo(V_{n-1},V_n), \geo(V_n, V_1)$ are the edges of $C'$. We can see that each edge of $C'$ can be represented as $-H_i' \cap C'$, for one $i \in \{1,\dots,n\}$. Therefore, the set
\[G_{C'} \coloneqq \{-H_1' \cap C',\dots,-H_n' \cap C'\}\]
is the collection of all the edges of $C'$, and each element of $G_{C'}$ is a geodesic.

\par From now on to the end of the paper, we assume \(A,B \subseteq \R^3\) are proper polyhedral cones and $A \cap B$ is nontrivial, i.e., $\{0\} \subsetneq A \cap B$. Let 
\begin{equation}\label{A'B'}
    A'=A\cap \mathbb{S}^2 \quad \mbox{and} \quad B'= B \cap \mathbb{S}^2,
\end{equation}
we can conclude that $A' \cap B' \neq \varnothing$. Now we apply SRM to $A'$ and $B'$.

\begin{proposition}\label{SRM_poly}
    Suppose sets $A', B'$ are defined in (\ref{A'B'}). There exists $r>0$ such that for any initial guess $x$ that satisfies $d_{A' \cap B'}^g (x) \leq r$ and for which $S_T(x)$ is well-defined, the sequence generated by SRM converges a point within \(A' \cap B'\) in no more than three steps. 
\end{proposition}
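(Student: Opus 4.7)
The plan is to adapt the case analysis of Theorem~\ref{main} by linearizing at a base point $q' \in A' \cap B'$ close to $x$ and reducing the SRM iteration on $\mathbb{S}^2$ to a two-dimensional CRM iteration inside the tangent plane $q'^{\perp} \cong \R^2$. Since $A, B$ are polyhedral sets, Lemma~\ref{PolyhedralLocallyCone}, combined with the compactness of $A' \cap B'$ and the finite facial structure of $A, B$, should furnish a uniform $r > 0$ such that for every $q' \in A' \cap B'$
\[
A \cap B_r(q') = (q' + T_A(q')) \cap B_r(q'), \qquad B \cap B_r(q') = (q' + T_B(q')) \cap B_r(q'),
\]
where $T_A(q'), T_B(q')$ denote the Euclidean tangent cones of $A, B$ at $q'$.

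Because $A$ and $B$ are cones at the origin, every active supporting hyperplane of $A$ or $B$ at $q'$ has a normal vector orthogonal to $q'$, so the tangent cones split as orthogonal direct sums $T_A(q') = K_A \oplus \R q'$ and $T_B(q') = K_B \oplus \R q'$, where $K_A, K_B$ are closed convex polyhedral cones in the tangent plane $q'^{\perp}$. Consequently projections and reflections over $T_A(q')$ preserve the $q'$-component and act as $P_{K_A}, R_{K_A}$ on the $q'^{\perp}$-component, and similarly for $T_B(q')$. Choosing $q' = P^g_{A' \cap B'}(x)$ and writing $x = v + t q'$ with $v \in q'^{\perp}$ and $t = \langle x, q' \rangle$, local conicity yields the exact identities
\[
R^g_{A'}(x) = R_{K_A}(v) + t q', \qquad R^g_{B'}(R^g_{A'}(x)) = R_{K_B} R_{K_A}(v) + t q',
\]
while Corollary~\ref{PreserveNorm} applied to the cones $T_A(q'), T_B(q')$ gives $\|R^g_{A'}(x) - q'\| = \|R^g_{B'}(R^g_{A'}(x)) - q'\| = \|x - q'\| \le r$, so all three iterates remain in $B_r(q') \cap \mathbb{S}^2$.

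Since the three iterates share the $q'$-component $tq'$, their Euclidean circumcenter equals $\widetilde{C}(v) + t q'$, where $\widetilde{C}(v)$ denotes the 2D CRM iterate of $v$ for the pair $K_A, K_B$ inside $q'^{\perp}$. Normalizing gives
\[
S_T(x) = \frac{\widetilde{C}(v) + t q'}{\sqrt{\|\widetilde{C}(v)\|^2 + t^2}}.
\]
Since reflections over cones at the origin are positively homogeneous, so is $\widetilde{C}$; an induction then shows that the tangent component $v_n$ of the $n$th SRM iterate $x_n$ is a positive scalar multiple of $\widetilde{C}^n(v)$, and that $\langle x_n, q' \rangle$ is nondecreasing (since $\|\widetilde{C}(v)\| \le \|v\|$ by the case analysis of Theorem~\ref{main} combined with Corollary~\ref{PreserveNorm}, and $\|v\|^2 + t^2 = 1$). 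Theorem~\ref{main} applied in $q'^{\perp}$ to $K_A, K_B$ yields $\widetilde{C}^N(v) \in K_A \cap K_B$ for some $N \le 3$; as $K_A \cap K_B$ is a cone, $v_N \in K_A \cap K_B$, whence $x_N \in T_A(q') \cap T_B(q')$, and since $x_N \in B_r(q') \cap \mathbb{S}^2$, local conicity upgrades this to $x_N \in A \cap B \cap \mathbb{S}^2 = A' \cap B'$.

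The main obstacle I expect is obtaining the uniform $r$ in the first step: pointwise local conicity is immediate from Lemma~\ref{PolyhedralLocallyCone}, but near lower-dimensional strata (edges or vertices) of $A' \cap B'$ the pointwise radius at $P^g_{A' \cap B'}(x)$ can degenerate as one approaches a face where additional constraints activate, so the argument needs either to replace the projection by a nearby vertex-type base point where the radius is bounded below, or to exploit the finite stratification of $A' \cap B'$ to obtain a uniform bound. A secondary technical point is that $S_T$ remains well-defined along the iteration, i.e., that $\widetilde{C}(v_n) + \langle x_n, q' \rangle q' \neq 0$; this is automatic since $\langle x_n, q' \rangle > 0$ by the monotonicity established above.
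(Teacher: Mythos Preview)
Your linearization approach is essentially sound and is in fact more systematic than the paper's proof. The paper does \emph{not} reduce SRM to CRM inside a tangent plane; instead it constructs $r$ explicitly as the minimum, over the finitely many vertices $V_t$ of $A'\cap B'$, of the geodesic distance from $V_t$ to the inactive edges in $G_{A'}\cup G_{B'}$, and then splits into two cases: if $x$ lies in the $r$-tube of $A'\cap B'$ but outside every ball $\mathbb{B}_r(V_t)$, the projection lands on a single edge $E$ and one SRM step already gives $P^g_E(x)\in A'\cap B'$; if $x\in\mathbb{B}_r(V_t)$ for some vertex $V_t$, the paper simply asserts the iteration is ``analogous to the proof of Theorem~\ref{main}'' and stops. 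Your tangent-plane reduction is exactly what would be needed to flesh out that second case (with $q'=V_t$), so your argument actually supplies detail the paper omits; conversely, the paper's explicit vertex-based construction of $r$ resolves the uniform-radius obstacle you flagged.

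On that obstacle: you are right that $r_{A}(q')$, the local-conicity radius of $A$ at the moving point $q'=P^g_{A'\cap B'}(x)$, degenerates as $q'$ approaches a lower-dimensional stratum, so your first proposed fix (replace $q'$ by a nearby vertex when $x$ is close to one) is the correct one and is precisely what the paper does. The stratification argument you mention would also work but is overkill here. Two minor technical checks you glossed over are fine: (i) your sphere-center formula $S_T(x)=\dfrac{\widetilde{C}(v)+tq'}{\sqrt{\|\widetilde{C}(v)\|^2+t^2}}$ is correct in both the cardinality-three case (where $\widetilde{C}(v)=0$ and $S_T(x)=q'$) and the cardinality-two case (where it reduces to the normalized Euclidean midpoint, which is the geodesic midpoint); (ii) the monotonicity $\langle x_n,q'\rangle\nearrow$ that keeps the iterates inside $B_r(q')$ indeed follows from $\|\widetilde{C}(v)\|\le\|v\|$, which holds in every branch of Theorem~\ref{main}'s case analysis since either $\widetilde{C}(v)=0$ or $\widetilde{C}(v)$ is a midpoint of two equal-norm vectors.
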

\begin{proof}
    We know $A' \cap B' \neq \varnothing$. Let $\mathbb{B}_r(x)$ denote the closed ball centred at $x$ with radius $r$ in $\mathbb{S}^2$, i.e.,
    \[\mathbb{B}_r(x) \coloneqq \{w \in \mathbb{S}^2 \,|\, d^g(x,w) \leq r\}. \]
    \begin{itemize}
        \item We first show how to determine $r$. Since $A'$ and $B'$ are geodesically convex polyhedral sets, this makes $A' \cap B'$ a geodesically convex polyhedral set as well. Suppose $\overline{G}_{A'}, \overline{G}_{B'}$ is the least set of closed half-spheres that defines $A'$ and $B'$ respectively. Let $G_{A',B'} = G_{A'} \cup G_{B'}$, therefore $G_{A',B'}$ defines $A' \cap B'$ but not necessarily least, $G_{A'\cap B'} \subseteq G_{A',B'}$. We write $A' \cap B'$ as
        \[A' \cap B' = \conv \{V_1,\dots,V_n\},\]
        where $V_1,\dots,V_n$ are the vertices of \(A' \cap B'\). For each $V_t$, there exists a set of active geodesics $G_{V_t}^+ \subseteq G_{A',B'}$ such that for any geodesic $G \in G_{V_t}^+$, we have $V_t \in G$. Let
        \[r_t = \mbox{min}\, \{d_G^g(V_t) \,|\, G \in G_{A',B'} \setminus G_{V_t}^+\}.\]
        By definition, we can see $d_G^g(V_t) = 0$ if and only if $G \in G_{V_t}^+$, therefore $r_t >0$. Let $t$ range from $1$ to $n$ and lastly let
        \[r = \mbox{min}\, \{r_1,\dots,r_n\},\]
        this is the $r$ we want to find. In the following, we will use the same letter $r$ for convenience.
        
        \item Now we show the statement. Based on the position of $x$, the proof can be divided into two sections. Suppose the initial guess $x \notin A' \cap B'$, otherwise $x$ is already a solution. For simplicity, in the following we let \(y = R^g_{A'}(x)\) and \(z = R^g_{B'}(R^g_{A'}(x))\). Let \(x_1 = S_T(x)\), and similarly \(y_1 = R^g_{A'}(x_1), z_1 = R^g_{B'}(R^g_{A'}(x_1))\) as well. 
        \begin{enumerate}
            \item If $x$ satisfies $d_{A' \cap B'}^g (x) \leq r$ and $x \notin \mathbb{B}_r(V_t)$ for all $t=1,\dots,n$, i.e., $d^g(x,V_t) >r$ for any $t=1,\dots,n$, we can conclude that $P_{A' \cap B'}^g (x)$ is projected to an edge of $A' \cap B'$. We denote the edge as $E$. Then
            \begin{enumerate}
                \item  either $x \in A$, makes $x=y$ and $z = R_{E}^g (x)$, following $S_T(x) = \frac{1}{2}(x+z) = P_E^g (x) \in A' \cap B'$;
                \item or $x \in B$, makes $y=R_E^g (x)$ and $z = R_E^g(y) = R_E^g(R_E^g (x)) = x$, following $S_T(x) = \frac{1}{2}(x+y) = P_E^g (x) \in A' \cap B'$.
            \end{enumerate}
            That is, $x$ converges to a point in $A' \cap B'$ in one step.

        \item If $x\in \mathbb{B}_r(V_t)$ for some $t$, the iterations is analogous to the proof of Theorem~\ref{main}. We can conclude that $x$ converges to a point in $A' \cap B'$ in at most three steps.

        \end{enumerate}
    \end{itemize}

\end{proof}

\subsection{Back to $\mathbb{R}^3$ from $\mathbb{S}^2$ }

In this subsection, we characterize the convergence properties of two proper polyhedral cones in \(\mathbb{R}^3\), building on the results established in the previous subsections.

\begin{theorem}\label{thm:R3poly}
    Suppose \(A,B \subseteq \R^3\) are proper polyhedral cones and 
$A \cap B$ is nontrivial, $A', B'$ are defined in (\ref{A'B'}). Let
\[D = \{x \in \mathbb{S}^2 \,|\, d_{A' \cap B'}^g (x) \leq r\},\]
where $r$ is chosen as in Proposition~\ref{SRM_poly}. If the initial guess $x \in \Ker C_T \cup \cone (D)$, CRM locates a feasible point in $A \cap B$ in at most three steps.
\end{theorem}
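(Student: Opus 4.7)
The hypothesis splits naturally into two cases, which I will treat separately. If $x \in \Ker C_T$, then by definition $C_T(x) = 0 \in A \cap B$, so a single CRM step suffices. For the substantive case $x \in \cone(D) \setminus \Ker C_T$, my plan is to transport the three-step finite convergence from Proposition~\ref{SRM_poly} (which concerns SRM on $\mathbb{S}^2$) back to CRM on $\R^3$ using positive homogeneity and radial projection. Writing $x = \lambda x'$ with $\lambda > 0$ and $x' \in D$ effects this reduction.

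The bridge is a correspondence lemma: for every $u \in \R^3 \setminus \Ker C_T$, the CRM iterate $C_T(u)$ is a strictly positive multiple of the SRM iterate $S_T(u/\|u\|)$. I plan to prove this in three steps. First, reflections through closed convex cones are positively homogeneous, so $C_T(\lambda v) = \lambda C_T(v)$ whenever the right-hand side is defined, which reduces the claim to $u' = u/\|u\| \in \mathbb{S}^2$. Second, by Corollary~\ref{PreserveNorm} the three points $u'$, $y' = R^g_{A'}(u')$, $z' = R^g_{B'}(y')$ all lie on $\mathbb{S}^2$; since they share unit Euclidean norm, the set of points in $\R^3$ equidistant from them is a line through the origin, perpendicular to $\aff\{u',y',z'\}$. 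Third, the Euclidean circumcenter $C_T(u')$ lies on this line, so the two unit-length candidates for $S_T(u')$ are exactly $\pm C_T(u')/\|C_T(u')\|$, and the minimal-geodesic-distance rule defining $S(\cdot,\cdot,\cdot)$ selects the one with $\langle p, u' \rangle > 0$. Iterating the lemma together with homogeneity yields $C_T^n(x) = \lambda\, \beta_1 \cdots \beta_n\, S_T^n(x')$ with each $\beta_i > 0$, provided no intermediate iterate enters $\Ker C_T$ (such an excursion completes CRM in at most one additional step). Proposition~\ref{SRM_poly} then supplies $n \leq 3$ with $S_T^n(x') \in A' \cap B'$, whence $C_T^n(x) \in \cone(A' \cap B') \subseteq A \cap B$.

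The main obstacle I anticipate is justifying the strict positivity of the scalar $\beta$ in the correspondence lemma: the antipode $-S_T(u')$ also lies on the equidistance line, and ruling it out requires that the three reflected points remain clustered within a single open hemisphere containing $A' \cap B'$, so that $\langle C_T(u'), u' \rangle > 0$. I expect the radius $r$ chosen in Proposition~\ref{SRM_poly} is already tight enough to enforce this condition along the entire orbit, since a sign flip would force one of the reflections to cross a face of $A' \cap B'$ that the radius argument explicitly forbids. A secondary, more routine obstacle is the treatment of degenerate configurations in which two of $u', y', z'$ coincide or all three lie on a common great circle; these are either absorbed into $\Ker C_T$ or dispatched by the direct midpoint computation used in the proofs of Theorem~\ref{main} and Proposition~\ref{SRM_poly}.
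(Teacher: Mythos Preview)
Your proposal is correct and follows essentially the same route as the paper: reduce to the sphere via positive homogeneity of cone reflections, establish that $C_T$ on $\R^3$ and $S_T$ on $\mathbb{S}^2$ track each other along rays, and then invoke Proposition~\ref{SRM_poly}. You are in fact more careful than the paper, which simply asserts $[C_T(x_n)] = [S_T(\hat{x}_n)]$ without addressing the antipodal ambiguity you flag as your main obstacle; your hemisphere argument is the right way to close that gap.
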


\begin{proof}
    It is only necessary to check the case when $x \in \cone (D)$. Since $x \in \cone (D)$, we have $R_A([x]) = [R_A(x)]$ and $R_B([x]) = [R_B(x)]$. Without lose of generality, suppose $x \neq 0$ and let $\hat{x}\coloneqq \frac{x}{\|x\|} = [x]\cap \mathbb{S}^2$. Since $A \cap B$ is nontrivial, $A' \cap B' \neq \varnothing$. By Definition of reflection operators in $\mathbb{S}^2$, we have
    
        \[R_A ([x_n]) = [R_{A'}^g (\hat{x}_n)] \quad\mbox{and}\quad R_B(R_A ([x_n])) = [R_{B'}^g(R_{A'}^g (\hat{x}_n))].\]
        Combining with Corollary~\ref{PreserveNorm}, we can conclude that
        \[C_T([x_n]) = [C_T(x_n)] = [S_T(\hat{x}_n)].\]
        According to Proposition~\ref{SRM_poly}, $\hat{x}_n$ will converge to a point $\hat{x}^* \in A' \cap B'$ in no more than three steps, means $[x_n] \to [x^*] \subseteq A\cap B$. Namely, CRM finds a nonzero feasibility point in $A \cap B$ in at most 3 steps.
\end{proof}

To conclude this paper, we present an example to show that if the initial guess is outside of the zone determined in Theorem \ref{thm:R3poly}, the finite convergence of CRM may fail.

\begin{example}\label{counterexample}
    Let
    \[a_1 = (3,0,3), a_2 = (0,1,3), a_3 = (0,-1,3), a_4 = (-3,0,-2),\]
    and
    \[b_1 = (1,3,0), b_2 = (1,-3,0), b_3 = (-3,0,-1).\]
    Suppose
    \[A = \cone (\{a_1, a_2, a_3, a_4\}) \quad \mbox{and} \quad B = \cone (\{b_1,b_2,b_3\}).\]
    Sets $A, B$ are proper polyhedral cones in $\R^3$, $A\cap B$ is non-trivial, which is shown in Figure \ref{fig:coneAB}. There exists initial guess $x^0 \in \R^3$ such that sequence $\{C_T^n(x^0)\}$ does not converge to a feasible point in $A\cap B$ finitely.
\end{example}

\begin{figure}[!ht]
    \centering
    \includegraphics[width = 10cm]{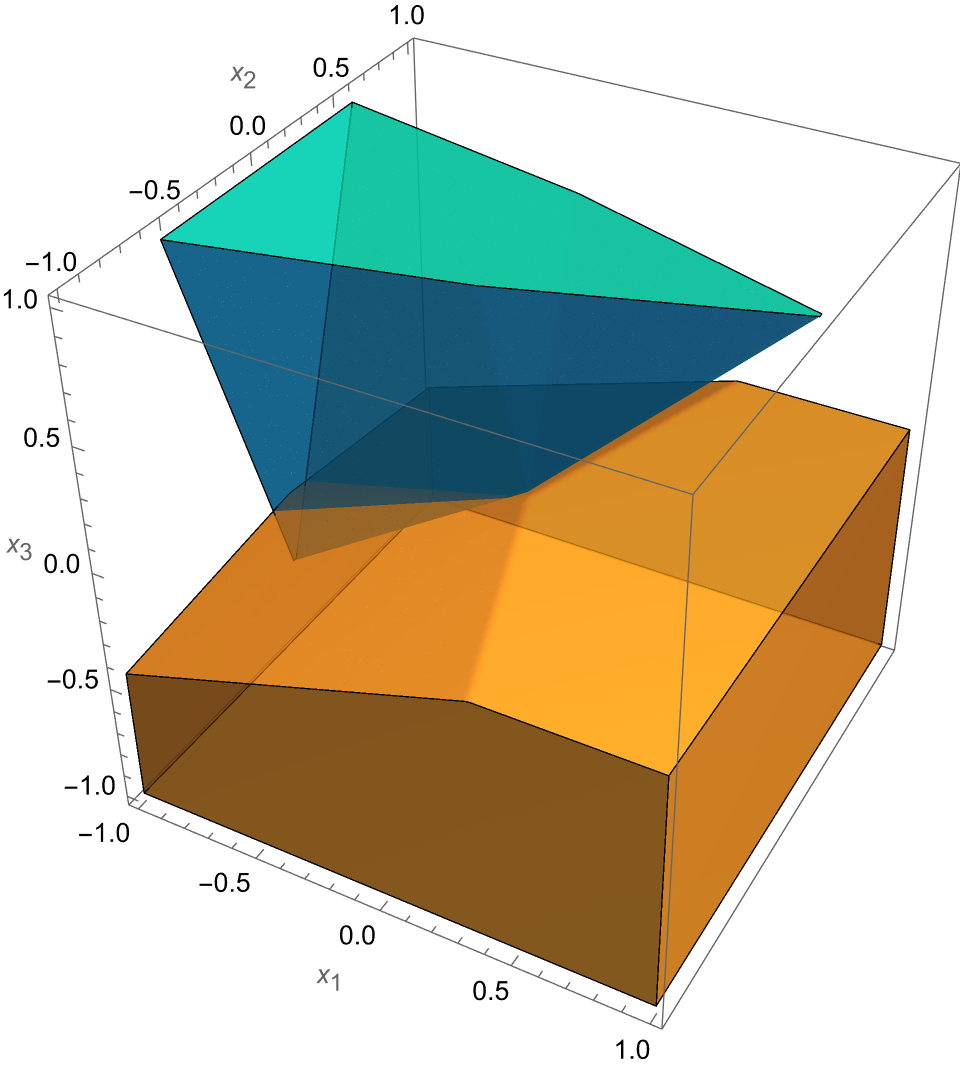}
    \caption{The figure of cone $A$ (cyan) and $B$ (golden).}
    \label{fig:coneAB}
\end{figure}

\begin{proof}
    Take $x^0 = (x_1^0,x_2^0,0)$, where $0<|x_2^0|<x_1^0$. Let $L=\{(s,0,s) \in \R^3 \,|\, s \geq 0\}$, we claim
    \[P_A(x^0) = P_L (x^0) = \left(\frac{x_1^0}{2},0,\frac{x_1^0}{2} \right) \quad \mbox{and} \quad R_A(x^0) = 2P_A(x^0) - x^0 = (0,-x_2^0, x_1^0).\]
    Indeed, according to the definition of conic hull, all points $y \in A$ can be represented as 
    \begin{align*}
        y & = \alpha_1 (3,0,3) + \alpha_2 (0,1,3) + \alpha_3 (0,-1,3) + \alpha_4 (-3,0,-2) \\
        & = (3(\alpha_1-\alpha_4),\, \alpha_2-\alpha_3,\, 3(\alpha_1+\alpha_2+\alpha_3) - 2\alpha_4),
    \end{align*}
    where $\alpha_1, \alpha_2, \alpha_3,\alpha_4 \geq 0$. Let $p = P_A(x^0)$ and for any $y \in A$, 
    \begin{align*}
        \langle y-p, x^0-p \rangle & = \left(3(\alpha_1-\alpha_4) - \frac{x_1^0}{2},\, \alpha_2-\alpha_3,\, 3(\alpha_1+\alpha_2+\alpha_3) - 2\alpha_4 -\frac{x_1^0}{2} \right) \\
        & \;\; \cdot \left(\frac{x_1^0}{2}, x_2^0, -\frac{x_1^0}{2} \right)\\
        & = x_2^0 (\alpha_2-\alpha_3) - \frac{3 x_1^0}{2}(\alpha_2+\alpha_3) - \frac{x_1^0}{2}\alpha_4 \\ 
        & \leq |x_2^0|(\alpha_2+\alpha_3) - \frac{3 x_1^0}{2}(\alpha_2+\alpha_3) - \frac{x_1^0}{2}\alpha_4 \\
        & \leq (x_1^0 - \frac{3 x_1^0}{2} )(\alpha_2+\alpha_3) - \frac{x_1^0}{2}\alpha_4 \\ 
        & \leq 0.
    \end{align*}
    By Theorem~\ref{ProjectionTheorem}, $p$ is the projection of $x^0$ to $A$. Next, we claim
    \[P_B(R_A(x^0)) = (0,-x_2^0,0) \quad \mbox{and} \quad R_B(R_A(x^0)) = (0,-x_2^0,-x_1^0).\]
    Indeed, all points $y \in B$ can be represented as 
    \[y = \beta_1 (1,3,0) + \beta_2 (1,-3,0) + \beta_3 (-3,0,-1) = (\beta_1+\beta_2-3\beta_3,\, 3(\beta_1-\beta_2),\, -\beta_3),\]
    where $\beta_1, \beta_2, \beta_3 \geq 0$. Let $q = (0,-x_2^0,0)$ and for any $y \in B$, we have
    \begin{align*}
        \langle y-q, R_A(x^0)-q \rangle & = (\beta_1+\beta_2-3\beta_3 ,\, 3(\beta_1-\beta_2)+x_2^0,\, -\beta_3) \cdot (0,0,x_1^0) = -x_1^0 \beta_3 \leq 0.
    \end{align*}
    For convenience, let $u = (0,-x_2^0, x_1^0)$ and $v = (0,-x_2^0,-x_1^0)$. Since $C_T(x^0) = C(x,u,v) \coloneqq (c_1,c_2,c_3)$, then $C_T(x^0)$ satisfies
    \begin{align*}
        \|C_T(x^0)-u\|^2 & = c_1^2 + (c_2+x_2^0)^2 + (c_3-x_1^0)^2 \\
        = \|C_T(x^0)-v\|^2 & = c_1^2 + (c_2+x_2^0)^2 + (c_3+x_1^0)^2 \\
        = \|C_T(x^0)-x^0\|^2 & = (c_1-x_1^0)^2 + (c_2-x_2^0)^2 + c_3^2.
    \end{align*}
    Since $x_1^0 \neq 0$, we can derive $c_3 =0$ and the above equations is equivalent to 
    \[c_1^2 + (c_2+x_2^0)^2 + (x_1^0)^2= (c_1-x_1^0)^2 + (c_2-x_2^0)^2, \;\;\mbox{i.e.,} \;\; -c_1 x_1^0 = 2c_2 x_2^0.\]
    Add the condition $C_T(x^0) \in \aff (x^0,u,v)$, find $C_T(x^0)$ is equivalent to solve the optimization problem:
    \[\mbox{minimize} \quad c_1^2 + (c_2+x_2^0)^2 + (x_1^0)^2\]
    \[\mbox{subject to}\quad c_1x_1^0 + 2c_2 x_2^0 = 0.\]
    Using the Lagrange multipliers, we can derive
    \[c_1 = \frac{2x_1^0 (x_2^0)^2}{4(x_2^0)^2 + (x_1^0)^2},\quad c_2 = \frac{-x_2^0 (x_1^0)^2}{4(x_2^0)^2 + (x_1^0)^2}.\]
    We can see $0<c_1<x_1^0, 0<|c_2|<|x_2^0|$, $\{C_T^n(x^0)\}$ will converge to 0. However, according to the definition of sets $A$ and $B$, $c_1 \leq c_3$ is a necessary condition to become a feasible point of $A\cap B$. Since $0 = c_3 < c_1$, $c$ cannot be in $A\cap B$, and for the same reason all finite iterations cannot be in $A\cap B$ either.
\end{proof}

\section*{Acknowledgements}
I would like to thank my supervisor, Associate Professor Vera Roshchina, for guiding me in exploring this topic and providing invaluable feedback. I am also grateful to my joint supervisor, Dr. Mareike Dressler, for her valuable suggestions and comments in refining and polishing this paper.

\bibliography{refs}
\bibliographystyle{plain}

\end{document}